\newcommand{\ind}{1\hspace{-0.098cm}\mathrm{l}}
\def\dd{\mbox{d}}
\def\P{\mathbb{P}}
\def\R{\mathbb{R}}
\newtheorem{thm}{Theorem}
\newtheorem{cor}[thm]{Corollary}
\newtheorem{lem}[thm]{Lemma}
\renewcommand{\geq}{\geqslant}
\renewcommand{\leq}{\leqslant}
\def\parxi{\xi}
\def\ggz{g_2} 
\def\gge{g_1} 
\title{Persistence probabilities of MA(1) sequences with Laplace innovations and $q$-deformed zigzag numbers}
\author[1]{Frank Aurzada}
\author[2]{Kilian Raschel}
\affil[1]{Technical University of Darmstadt, Germany \href{mailto:aurzada@mathematik.tu-darmstadt.de}{\texttt{aurzada@mathematik.tu-darmstadt.de}}}
\affil[2]{CNRS, Université d'Angers, France, \href{mailto:raschel@math.cnrs.fr}{\texttt{raschel@math.cnrs.fr}}}
\date{\today}
\begin{document}
\maketitle
\allowdisplaybreaks

\begin{abstract}
We study the persistence probabilities of a moving average process of order one with innovations that follow a Laplace distribution. The persistence probabilities can be computed fully explicitly in terms of classical combinatorial quantities like certain $q$-Pochhammer symbols or $q$-deformed analogues of Euler's zigzag numbers, respectively. Similarly, the generating functions of the persistence probabilities can be written in terms of $q$-analogues of the exponential function or the $q$-sine/$q$-cosine functions, respectively.
\end{abstract}

{\bf Keywords:} Euler's zigzag numbers; exit time; moving average process; persistence probability; $q$-cosine; $q$-exponential; $q$-series; $q$-sine

\section{Introduction and main results}
The purpose of this paper is to establish a connection between two quite distinct mathematical objects: On the one hand, we have certain well-studied probabilistic quantities, namely persistence (i.e.\ non-exit) probabilities for stochastic processes. At the other end of the connection, we find certain classical combinatorial quantities, namely Euler's zigzag numbers and the $q$-analogues of them introduced by Stanley (cf.\ \cite{St-10}).

The persistence probability related to a one-dimensional stochastic process is simply the probability that the process does not change sign in a finite time frame. These quantities are fundamental building blocks in many applications, be it within probability theory in the analysis of complicated probabilistic models or be it in insurance, finance, etc. Often, these probabilities are studied with a motivation coming from theoretical physics, and for this point of view we refer to the survey papers \cite{Bray2013Persistence,SALCEDOSANZ20221} and the monograph \cite{Metzler2014FirstPassage}.
A survey of the mathematical literature is provided in \cite{Aurzada2015Persistence}.

In the present paper, the point of view is different: Namely, we prove a tight relation between certain persistence probabilities and classical combinatorial quantities. We stress that we are able to give explicit formulas for the persistence probabilties, contrary to most other works, where the focus is on the persistence exponent, i.e.\ the exponential decay rate, only.

Let us consider the persistence probabilities of a moving average (MA) process of order one with Laplace innovations. More concretely, let $(X_i)$ be i.i.d.\ random variables with generalized Laplace distribution, i.e.\ with density
\begin{equation} \label{eqn:densityform}
\parxi e^x \ind_{x<0} + (1-\parxi) e^{-x}\ind_{x>0}, \qquad x\in\R,
\end{equation}
with an asymmetry parameter $\parxi\in[0,1]$. This includes the case of the classical symmetric Laplace distribution when $\parxi=\frac{1}{2}$, the standard exponential distribution for $\parxi=0$, and the negative of a standard exponential distribution for $\parxi=1$.

For a coupling parameter $\theta\in\R$, let us look at the probability
$$
p_n:=p_n^{\theta,\parxi}:=\P( X_1 > \theta X_0, X_2 > \theta X_1, \ldots, X_n > \theta X_{n-1} ),\qquad n=1,2,\ldots
$$
We shall see that we can give explicit expressions for the $p_n^{\theta,\parxi}$ in terms of classical combinatorial quantities. The cases $\theta>0$ and $\theta<0$ turn out to be qualitatively different. Further, the generating function of the $(p_n)$, $P^{\theta,\parxi}(z):=\sum_{n=1}^\infty p_n^{\theta,\parxi} z^n$, can be expressed in terms of certain $q$-series.

\paragraph{Notation.} In order to formulate the results, we shall need some notation from enumerative combinatorics. First of all, we will use the $q$-Pochhammer symbol 
\begin{equation*}
    (a;q)_n:=\prod_{k=0}^{n-1} (1-a q^k).
\end{equation*}
We further introduce the $q$-factorial as
\begin{equation*}
    [n]_q! := \prod_{k=1}^{n} \frac{1-q^k}{1-q}  = \frac{(q;q)_n}{(1-q)^n},
\end{equation*}
with the continuous extension $[n]_1!=n!$. Finally, we define the following $q$-analogues of the classical exponential and trigonometric functions:
\begin{equation} \label{eqn:qversionstrigexp}
e_q(z) := \sum_{n=0}^\infty \frac{z^{n}}{[n]_q!},\quad
    \cos_q(z) := \sum_{n=0}^\infty \frac{(-1)^nz^{2n}}{[2n]_q!},
    \quad\text{and}\quad
    \sin_q(z) := \sum_{n=0}^\infty \frac{(-1)^nz^{2n+1}}{[2n+1]_q!}.
\end{equation}

These functions, the generating function $P^{\theta,\parxi}$, and all other generating functions below are well-defined at least for all $z\in\mathbb{C}$ with $|z|<1$, which will be the general assumption on $z$ from now. 

We are now prepared to formulate the main results. The cases $\theta>0$ and $\theta<0$ have to be handled separately. We remark that the case $\theta=0$ is trivial and we simply have $p_n^{0,\parxi} = (1-\parxi)^n$.

\paragraph{The case $\theta>0$.}
In the case $\theta>0$, the persistence probabilities can be written in terms of $q$-Pochhammer symbols, and their generating functions can be written in terms of the $q$-analogue of the exponential function.

\begin{thm} \label{thm:maintheoremthetapos}
Let  $\theta>0$ and $\parxi\in[0,1]$.
\begin{enumerate}[label={\rm(\alph{*})},ref={\rm(\alph{*})}]
    \item\label{thm1-ita}We have $p_n^{\theta,\parxi}=p_n^{1/\theta,1-\parxi}$ for all $n$.
    \item\label{thm1-itb} For $\theta=1$,  $p_n^{1,\parxi}=\frac{1}{(n+1)!}$.
    \item\label{thm1-itc} For $\parxi=1$, $p_n^{\theta,1}=\frac{\theta^{n(n+1)/2}}{[n+1]_\theta!}$.
    For  $\parxi=0$, we have $p_n^{\theta,0}=\frac{1}{[n+1]_\theta!}$.
    \item\label{thm1-itd} For $\theta\neq 1$ and $\parxi\in[0,1)$, the persistence probabilities are given by
\begin{equation} \label{eqn:ppforthetappositive}
p_n^{\theta,\parxi}=\frac{(-\frac{\parxi}{1-\parxi};\theta)_{n+1}}{(\theta;\theta)_{n+1}}\, (1-\theta)^{n+1}(1-\parxi)^{n+1},\qquad n\geq 1.
\end{equation}
\item\label{thm1-ite} For any $\theta>0$, the generating function of the $(p_n)$ equals
\begin{equation}
\label{eq:formula_GF_theta>0}
P(z):=P^{\theta,\parxi}(z) := \sum_{n=1}^\infty p_{n}^{\theta,\parxi} z^n = \frac{1}{z}\left(  \frac{e_\theta\bigl((1-\parxi)z\bigr)}{e_\theta\bigl(-\parxi z\bigr)}-1\right) -1  .
\end{equation}
\end{enumerate}
\end{thm}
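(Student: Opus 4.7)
I will handle the five parts in an order that lets the later ones build on the earlier ones. Part \ref{thm1-itb} is immediate: for $\theta=1$ the persistence event is the ascent $X_0<X_1<\dots<X_n$, which has probability $\frac{1}{(n+1)!}$ by exchangeability of the continuous i.i.d.\ sequence. Part \ref{thm1-ita} follows from the substitution $Z_i:=-X_{n-i}$; the $Z_i$ are i.i.d.\ with density \eqref{eqn:densityform} in which $\parxi$ is replaced by $1-\parxi$, and the original event $\{X_{i+1}>\theta X_i,\ i=0,\dots,n-1\}$ rewrites as $\{Z_j>(1/\theta)Z_{j-1},\ j=1,\dots,n\}$, so that $p_n^{\theta,\parxi}=p_n^{1/\theta,1-\parxi}$.

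For part \ref{thm1-itc}, by \ref{thm1-ita} it suffices to treat $\parxi=0$. With $X_i\sim\mathrm{Exp}(1)$ i.i.d., set $Z_i:=X_i/\theta^i$. This preserves the event since $X_{i+1}>\theta X_i\Leftrightarrow Z_{i+1}>Z_i$, and turns $(Z_i)$ into independent exponentials with rates $\lambda_i=\theta^i$. The classical formula $\P(Z_0<Z_1<\cdots<Z_n)=\prod_{i=0}^{n-1}\lambda_i/(\lambda_i+\cdots+\lambda_n)$, combined with $\sum_{j=i}^n\theta^j=\theta^i[n-i+1]_\theta$, telescopes to $\frac{1}{[n+1]_\theta!}$, as claimed.

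For part \ref{thm1-itd} the core idea is to condition on signs. Write $X_i=\epsilon_i E_i$ where $\epsilon_i\in\{\pm 1\}$ with $\P(\epsilon_i=-1)=\parxi$ and $E_i\sim\mathrm{Exp}(1)$ are all independent. Because $\theta>0$, the persistence constraint forces $X_i>0\Rightarrow X_{i+1}>0$, so only sign patterns of the form $(-,\dots,-,+,\dots,+)$ with $k\in\{0,\dots,n+1\}$ initial minuses survive. Conditioning on such a pattern (weight $\parxi^k(1-\parxi)^{n+1-k}$), the transition constraint at position $k$ is automatic, the positive block contributes $p_{n-k}^{\theta,0}=\frac{1}{[n+1-k]_\theta!}$ directly from \ref{thm1-itc}, and the negative block on $(E_0,\dots,E_{k-1})$, after time-reversal, is the persistence event with parameters $(1/\theta,0)$ of length $k-1$, so contributes $\frac{1}{[k]_{1/\theta}!}=\frac{\theta^{\binom{k}{2}}}{[k]_\theta!}$ (using $[k]_{1/\theta}!=\theta^{-k(k-1)/2}[k]_\theta!$). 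Summing gives
\begin{equation*}
p_n^{\theta,\parxi}=\sum_{k=0}^{n+1}\frac{\theta^{\binom{k}{2}}\,\parxi^k(1-\parxi)^{n+1-k}}{[k]_\theta!\,[n+1-k]_\theta!},
\end{equation*}
which I then identify with \eqref{eqn:ppforthetappositive} by applying the $q$-binomial theorem $(-a;\theta)_{n+1}=\sum_k\theta^{\binom{k}{2}}\binom{n+1}{k}_\theta a^k$ with $a=\parxi/(1-\parxi)$ and clearing denominators via $(\theta;\theta)_m=(1-\theta)^m[m]_\theta!$.

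Part \ref{thm1-ite} is then a direct consequence of the closed form: the sum above is a Cauchy convolution, so $\sum_{n\ge 0}p_n z^{n+1}=E_\theta(\parxi z)\,e_\theta((1-\parxi)z)-1$ with the \emph{second} $q$-exponential $E_\theta(w):=\sum_k\theta^{\binom{k}{2}}w^k/[k]_\theta!$, and the classical identity $E_\theta(w)\,e_\theta(-w)=1$ yields \eqref{eq:formula_GF_theta>0}. I expect the main obstacle to be the combinatorial bookkeeping in \ref{thm1-itd}: namely, recognizing the sign-monotonicity forced by $\theta>0$, performing the time-reversal on the negative block so that the $1/\theta$-factorials turn into $\theta$-factorials with the correct $\theta^{\binom{k}{2}}$ prefactor, and finally matching the resulting discrete convolution against the compact $q$-Pochhammer form.
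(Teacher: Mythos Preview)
Your proposal is correct, and it takes a genuinely different route from the paper. The paper proceeds analytically: it introduces the auxiliary function $p_n(z)=\P(X_1>\theta z,\,X_2>\theta X_1,\ldots)$, computes it by inductive integration separately for $z\geq 0$ and $z\leq 0$, derives and solves recursions for the resulting coefficients via generating functions to obtain \ref{thm1-ite}, and only then extracts \ref{thm1-itd} from the generating function using the $q$-binomial theorem; parts \ref{thm1-itb} and \ref{thm1-itc} are recovered as limiting cases. Your argument runs in the opposite direction and is more probabilistic: you prove \ref{thm1-itc} directly via the scaling $Z_i=X_i/\theta^i$ and the order-statistics identity for independent exponentials, then obtain \ref{thm1-itd} by conditioning on the sign pattern of $(X_0,\ldots,X_n)$---exploiting that $\theta>0$ forces the signs to be monotone so that only patterns $(-^k,+^{n+1-k})$ survive---and decouple the two blocks into independent $\parxi=0$ problems (one with parameter $\theta$, the other with $1/\theta$ after time reversal). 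The resulting convolution then gives \ref{thm1-ite} immediately via $E_\theta(w)\,e_\theta(-w)=1$.

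What each approach buys: your route is shorter and explains structurally why the Gauss $q$-binomial sum $(-a;\theta)_{n+1}=\sum_k \theta^{\binom{k}{2}}\binom{n+1}{k}_\theta a^k$ appears---the index $k$ is literally the number of negative signs. It also treats $\theta=1$ uniformly (your convolution formula specializes to $1/(n+1)!$ without passing to a limit). The paper's auxiliary-function method, by contrast, is heavier here but is the one that carries over to $\theta<0$, where your sign-monotonicity argument breaks down and no analogous decomposition is available. Two small points worth making explicit in a write-up: the exponential order-statistics identity $\P(Z_0<\cdots<Z_n)=\prod_{i=0}^{n-1}\lambda_i/(\lambda_i+\cdots+\lambda_n)$ deserves a one-line memorylessness justification, and the identity $E_\theta(w)=1/e_\theta(-w)$ is standard (Euler's product formulas for the two $q$-exponentials) but should be cited or sketched.
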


Let us first remark that in the case of symmetric distributions (meaning $\parxi=\tfrac12$), the invariance \ref{thm1-ita} was already derived in \cite[eq.~(26)]{MaDh-01}. Moreover, the case $\theta = 1$ in item~\ref{thm1-itb} is universal for all continuous distributions, by a simple symmetry argument; see \cite[eq.~(28)]{MaDh-01} for the original statement. Let us stress that this theorem (and also our second main theorem) is one of the rare instances where the persistence probabilities can be expressed explicitly, and in particular the influence of the parameters $\theta$ and $\parxi$ is fully explicit, see item~\ref{thm1-itd}. Let us also mention that the generating function in \eqref{eq:formula_GF_theta>0} may alternatively be expressed as a ratio of two infinite Pochhammer symbols (see \eqref{eq:expression_P_ratio_Pochhammer}) or in terms of the $q$-hypergeometric series ${}_1\phi_0$. Further, we remark that a small computation using the $q$-binomial theorem shows that $e_{1/\theta}(z)=\frac{1}{e_\theta(-z)}$, which gives the following alternative representation of \eqref{eq:formula_GF_theta>0}:
\begin{equation*}
P^{\theta,\parxi}(z)  = \frac{1}{z}\left(  e_\theta\bigl((1-\parxi)z\bigr)\cdot e_{1/\theta}\bigl(\parxi z\bigr)-1\right) -1;
\end{equation*}
and from this, one immediately obtains the generating function version $P^{\theta,\parxi}=P^{1/\theta,1-\parxi}$ of the probabilistic duality in item \ref{thm1-ita}. Note that the function $e_{1/\theta}(z)$ is the second natural $q$-analogue of the exponential function; it can be expressed as follows:
\begin{equation*}
    E_\theta(z) := e_{1/\theta}(z) =  \sum_{n=0}^\infty \frac{\theta^{n(n-1)/2} z^{n}}{[n]_\theta!}. 
\end{equation*}

Note that the formula \eqref{eq:formula_GF_theta>0} in item~\ref{thm1-ite} has four interesting borderline cases: For $\theta=1$, we obtain essentially the classical exponential function and for $\theta=0$ the geometric series:
\begin{equation*}
P^{1,\parxi}(z) =\frac{e^z-1}{z} -1,\qquad\text{and}\qquad P^{0,\parxi}(z)=\frac{1}{1-(1-\parxi)z}-1.
\end{equation*}

The other borderline cases are when $\parxi=0$ and $\parxi=1$, respectively, where we have
 \begin{equation*}
 P^{\theta,0}(z) = \frac{1}{z}\bigl(  e_\theta(z)-1\bigr) -1\qquad\text{and}\qquad P^{\theta,1}(z) = \frac{1}{z}\bigl(  e_{1/\theta}(z)-1\bigr) -1,
 \end{equation*}
 so that the generating function in \eqref{eq:formula_GF_theta>0} is a generalization of the $q$-exponential function. 
 
As a final remark, we note that when $\xi\in(0,1)$
 \begin{equation} \label{eqn:oneastart}
 p_n^{\theta,\parxi}\sim c_{\theta,\parxi} \begin{cases} \bigl((1-\theta)(1-\parxi)\bigr)^n, & \theta\in[0,1) \\
 \bigl((1-\frac{1}{\theta}) \parxi \bigr)^n, & \theta>1;
 \end{cases}
 \end{equation}
which yields a simple formula for the persistence exponent.
The above asymptotics for $\theta<1$ follows directly from \eqref{eqn:ppforthetappositive} by noting that the fraction of the $q$-Pochhammer symbols converges to a constant for $n\to\infty$. For $\theta>1$, one additionally uses item \ref{thm1-ita} of Theorem~\ref{thm:maintheoremthetapos}.

\paragraph{The case $\theta<0$.}
In order to formulate the main result in the case $\theta<0$, we need some more notation from combinatorics. Let us consider Euler's zigzag numbers $(E_n)_{n\geq1}$, which can be characterized by
\begin{equation*}
   \sum_{n=1}^\infty \frac{E_n}{n!} z^n :=  \frac{1}{\cos(z)}+\frac{\sin(z)}{\cos(z)}-1 = \frac{2\sin(\frac{z}{2})}{\cos(\frac{z}{2})-\sin(\frac{z}{2})},
\end{equation*}
where the last equality follows from classical trigonometric identities.
The zigzag numbers $(E_n)$ are classical objects in combinatorics, they start with $1$, $1$, $2$, $5$, $16$, $61$, $272$, $\ldots$ and are related e.g.\ to alternating permutations.

In his paper \cite{Stanley1976BinomialPosets} (cf.\ \cite{St-10}), Stanley introduces a $q$-analogue $(E_n(q))_{n\geq 1}$ of these numbers. According to Theorem~2.1 in \cite{St-10}, they can be characterized by
\begin{equation}
\label{eq:Stanley_q_analogue}
   \sum_{n=1}^\infty \frac{E_n(q)}{[n]_q!} z^n := \frac{1}{\cos_q(z)}+\frac{\sin_q(z)}{\cos_q(z)}-1.
\end{equation}
Stanley computes in particular $E_1(q)=E_2(q)=1$, and
\begin{align*}
E_3(q)& = q + q^2,\\
E_4(q) &= q^2 + q^3 + 2q^4 + q^5,\\
E_5(q) &= q^2 + 2q^3 + 3q^4 + 4q^5 + 3q^6 + 2q^7 +q^8 ,\ldots
\end{align*} 
It turns out that the persistence probabilities for $\theta<0$ can be expressed in terms of these $q$-analogues of Euler's zigzag numbers and that the generating function of the persistence probabilities can be expressed in terms of the $q$-analogues of sine and cosine.

\begin{thm}\label{thm:gfthetaneg}
Let $\theta<0$ and $\parxi\in[0,1]$. 
\begin{enumerate}[label={\rm(\alph{*})},ref={\rm(\alph{*})}]\setcounter{enumi}{5}
    \item\label{thm2-itf} We have $p_n^{\theta,\parxi}=p_n^{1/\theta,\parxi}$ for all $n$.
    \item\label{thm2-itg} For $\theta=-1$ and $\parxi=\frac{1}{2}$, we have $P^{-1,\frac{1}{2}}(z)=\frac{\frac{2}{z}\sin(\frac{z}{2})}{\cos(\frac{z}{2})-\sin(\frac{z}{2})}-1$.
    \item\label{thm2-ith} For any $\theta<0$, the generating function of the $(p_n)$ equals
\begin{equation}
\label{eq:main_formula_theta<0}
P^{\theta,\parxi}(z)=\sum_{n=1}^\infty p_n^{\theta,\parxi} z^n = \frac{1}{z \,\sqrt{\parxi(1-\parxi)}}\, \frac{\sin_{-\theta}(\sqrt{\parxi(1-\parxi)} z)}{\cos_{-\theta}(\sqrt{\parxi(1-\parxi)} z)-\sqrt{\frac{1-\parxi}{\parxi}} \sin_{-\theta}(\sqrt{\parxi(1-\parxi)} z)}-1.
\end{equation}
\item \label{thm2-iti} For $\parxi\in(0,1)$ we have
\begin{equation}
\label{eq:main_formula_theta<0_pn}
p_n^{\theta,\parxi} = (\parxi(1-\parxi))^{n/2}\cdot \sum_{r=1}^{n+1} \left[ \left(\sqrt{\frac{1-\parxi}{\parxi}}\right)^{r-1}
  \ \sum_{\substack{j_1+\dots+j_r=n+1\\ j_k\ \text{odd}}}
  \prod_{k=1}^r \frac{E_{j_k}(-\theta)}{[j_k]_q!}\right], \qquad n\geq 1.
\end{equation}
\end{enumerate}
\end{thm}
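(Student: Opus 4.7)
The plan is to derive the explicit formula \eqref{eq:main_formula_theta<0_pn} directly from the generating function identity \eqref{eq:main_formula_theta<0} already established in item~\ref{thm2-ith}, by expanding the right-hand side as a formal power series in $z$ and reading off the coefficient of $z^n$. First, I would introduce the shorthands $w:=\sqrt{\parxi(1-\parxi)}\,z$, $\alpha:=\sqrt{(1-\parxi)/\parxi}$, and $q:=-\theta>0$, so that \eqref{eq:main_formula_theta<0} rewrites as
$$ P^{\theta,\parxi}(z)+1 \;=\; \frac{1}{w}\cdot \frac{\sin_q(w)}{\cos_q(w)-\alpha\sin_q(w)}. $$

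The key combinatorial input is a parity splitting of the $q$-zigzag generating function~\eqref{eq:Stanley_q_analogue}. Since $\sin_q$ (resp.\ $\cos_q$) is an odd (resp.\ even) formal power series in $w$, and $\frac{1}{\cos_q(w)}-1$ contains only even powers of $w$, isolating the odd part of \eqref{eq:Stanley_q_analogue} yields
$$ \sum_{\substack{n\geq 1\\ n\text{ odd}}} \frac{E_n(q)}{[n]_q!}\,w^n \;=\; \frac{\sin_q(w)}{\cos_q(w)}. $$

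Next, I would expand the rational function above as a geometric series in the ``$q$-tangent'' $T(w):=\sin_q(w)/\cos_q(w)$; this is legitimate as a manipulation of formal power series because $T(w)$ has vanishing constant term. This gives
$$ \frac{T(w)}{1-\alpha T(w)} \;=\; \sum_{r=1}^{\infty} \alpha^{r-1}\, T(w)^r, \qquad T(w)^r \;=\; \sum_{m\geq r} w^m \sum_{\substack{j_1+\cdots+j_r=m\\ j_k\text{ odd}}} \prod_{k=1}^r \frac{E_{j_k}(q)}{[j_k]_q!}, $$
the second identity being a direct Cauchy product of $r$ copies of the odd generating function above. Dividing by $w$, setting $m=n+1$, and substituting back $w^n=(\parxi(1-\parxi))^{n/2}\,z^n$, I would equate the coefficient of $z^n$ on both sides of \eqref{eq:main_formula_theta<0} to obtain \eqref{eq:main_formula_theta<0_pn}.

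No substantial obstacle is anticipated: the argument is essentially a routine bookkeeping of generating function manipulations once item~\ref{thm2-ith} is at hand, with the only genuine ingredient being the parity decomposition of the $q$-zigzag generating function. The assumption $\parxi\in(0,1)$ in \ref{thm2-iti} is used precisely to ensure that the quantities $\sqrt{\parxi(1-\parxi)}$ and $\sqrt{(1-\parxi)/\parxi}$ are nonzero and finite, so that the change of variable $z\mapsto w$ and the geometric expansion are both valid.
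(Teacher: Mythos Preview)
Your proposal for part~\ref{thm2-iti} is correct and follows essentially the same route as the paper: the paper isolates the expansion of $\sin_q(z)/(\cos_q(z)-\delta\sin_q(z))$ into a separate lemma (Lemma~\ref{lem:chatgptlemma}), whose proof is precisely your parity-splitting of~\eqref{eq:Stanley_q_analogue} to identify $\tan_q$ with the odd-indexed $q$-zigzag series, followed by the geometric expansion $\tan_q/(1-\delta\tan_q)=\sum_{r\geq1}\delta^{r-1}\tan_q^r$ and the Cauchy product. The only cosmetic difference is packaging: the paper states the lemma with a parameter $\delta$ and then specialises to $\delta=\sqrt{(1-\parxi)/\parxi}$, whereas you carry $\alpha$ throughout.
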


We remark that, for $\theta<0$, the borderline cases $\parxi=0$ and $\parxi=1$ are trivial because we have $p_n^{\theta,0}=1$ and $p_n^{\theta,1}=0$ for all $n$; these cases are also included as continuous extensions of  \eqref{eq:main_formula_theta<0} and \eqref{eq:main_formula_theta<0_pn}.
The formula in item~\ref{thm2-itg} for $\theta=-1$ and $\parxi=\frac{1}{2}$ is universal for all symmetric distributions, as shown in \cite{MaDh-01} (eq.~(32) and below).
For $\theta=-1$ and general $\parxi\in[0,1]$, it follows from \eqref{eq:main_formula_theta<0} that
\begin{equation} \label{eqn:majumdardharextension}
p_n^{-1,\parxi} =  \parxi (\parxi(1-\parxi))^{n/2} \, \sum_{k\in\mathbb{Z}} \frac{1}{\left(\arctan(\sqrt{\frac{\parxi}{1-\parxi}})+k\pi\right)^{n+2}}.
\end{equation}
Note that this formula implies that $p_n^{-1,\parxi} \sim \parxi  (\parxi(1-\parxi))^{n/2} \arctan\bigl(\sqrt{\frac{\parxi}{1-\parxi}}\bigr)^{-(n+2)}$, as $n\to\infty$.
In the special case $\parxi=1/2$ (where the $\arctan$ becomes $\pi/4$), the formula \eqref{eqn:majumdardharextension} was obtained in \cite[eq.~(32)]{MaDh-01}. We further remark that the expression in \eqref{eqn:majumdardharextension} is in fact a polynomial in $\parxi$; see Lemma~\ref{lem:polynomial_in_xi}.
Finally, let us comment that the duality of generating functions corresponding to the duality of the $p_n$ in item \ref{thm2-itf} is equivalent to the well-known formula $\frac{\sin_{1/q}(z)}{\cos_{1/q}(z)}=\frac{\sin_{q}(z)}{\cos_{q}(z)}$ when applied to the generating functions in \eqref{eq:main_formula_theta<0}.

Our final remark in the case $\theta<0$ concerns the asymptotic behaviour of the persistence probabilities. Let $\arctan_q$ denote the $q$-analogue of the classical $\arctan$ function, defined as the inverse of $\tan_q:=\frac{\sin_q}{\cos_q}$ (see \eqref{eq:def_arctan_q} for the precise definition). 
\begin{cor}
\label{cor:asymptotics_theta<0}
    Let $\theta\in[-1,0]$. We have
\begin{equation*}
   \lim_{n\to\infty} \frac{1}{n}\log p_n^{\theta,\parxi} = \frac{\sqrt{\parxi(1-\parxi)}}{\arctan_{-\theta}\bigl(\sqrt{\frac{\parxi}{1-\parxi}}\bigr)}.
\end{equation*}
Since $\sin_q/\cos_q=\sin_{1/q}/\cos_{1/q}$, the analogous result holds for $\theta<-1$.
\end{cor}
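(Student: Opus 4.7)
The approach is standard singularity analysis applied to the explicit form of $P^{\theta,\parxi}(z)$ given by Theorem~\ref{thm:gfthetaneg}\ref{thm2-ith}. Since the coefficients $p_n^{\theta,\parxi}$ are non-negative, Pringsheim's theorem guarantees that the radius of convergence of $P^{\theta,\parxi}$ equals its smallest positive real singularity $z^*$; once $z^*$ is identified as a simple pole of $P^{\theta,\parxi}$ and shown to be the unique singularity in the closed disk $\{|z|\leq z^*\}$, a classical transfer argument yields $p_n^{\theta,\parxi}\sim C_{\theta,\parxi}\,(z^*)^{-n}$, and hence the exponential rate $1/z^*$ as claimed.

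The candidate $z^*$ is obtained by setting the denominator of \eqref{eq:main_formula_theta<0} to zero: the condition reads
\[
\tan_{-\theta}\bigl(\sqrt{\parxi(1-\parxi)}\, z\bigr) = \sqrt{\parxi/(1-\parxi)},
\]
whose smallest positive real solution, by the very definition of $\arctan_{-\theta}$ (cf.\ \eqref{eq:def_arctan_q}), is
\[
z^* = \frac{\arctan_{-\theta}\bigl(\sqrt{\parxi/(1-\parxi)}\bigr)}{\sqrt{\parxi(1-\parxi)}}.
\]
Simplicity of the pole will follow from the observation that $\sin_{-\theta}$ and $\cos_{-\theta}$ cannot vanish simultaneously (a consequence of a $q$-Pythagorean-type identity), so the derivative of the denominator at $z^*$ is nonzero; moreover $\sin_{-\theta}(\sqrt{\parxi(1-\parxi)}\,z^*)\neq 0$, so the numerator does not cancel the pole.

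The main obstacle is to rule out other singularities of $P^{\theta,\parxi}$ in the closed disk $\{|z|\leq z^*\}$ for $q:=-\theta\in[0,1)$. For $q=1$ (i.e.\ $\theta=-1$) this is immediate from the classical zeros of sine and cosine, namely $k\pi$ and $(k+\tfrac12)\pi$. For $q\in[0,1)$ one invokes the real-rootedness of the Jackson $q$-trigonometric functions $\sin_q$ and $\cos_q$, together with a Sturm-type interlacing argument to conclude that every zero of $\cos_q(u)-\sqrt{\parxi/(1-\parxi)}\sin_q(u)$ is real; then $z^*$ is the smallest positive zero and therefore the smallest in modulus, finishing the argument. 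A direct check in the boundary case $q=0$ (where $\sin_0(z)=z/(1+z^2)$, $\cos_0(z)=1/(1+z^2)$, so $P^{\theta,\parxi}$ is a rational function) serves as a sanity test.

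Finally, the case $\theta<-1$ reduces to $\theta\in(-1,0)$ either through the probabilistic duality of item~\ref{thm2-itf}, or equivalently via the identity $\sin_q/\cos_q = \sin_{1/q}/\cos_{1/q}$ applied directly to the implicit equation defining $z^*$. In either case the value of $z^*$, and hence the limit, is unchanged, justifying the remark that the same formula governs $\theta<-1$.
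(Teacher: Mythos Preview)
Your plan is essentially the paper's argument: locate the first positive singularity $z^*$ of the generating function \eqref{eq:main_formula_theta<0} as the smallest positive zero of its denominator, and read off the rate. The paper's own proof is a two-sentence sketch that invokes Pringsheim implicitly and leans on Lemma~\ref{lem:tan_q} (the product representation of $\tan_q$), which shows that $\tan_{-\theta}$ is a strictly increasing bijection from $[0,z_0)$ onto $[0,\infty)$; this is exactly what guarantees that the denominator vanishes and identifies the solution as $z^*=\arctan_{-\theta}\bigl(\sqrt{\parxi/(1-\parxi)}\bigr)/\sqrt{\parxi(1-\parxi)}$.

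Your additional programme---simplicity of the pole and exclusion of non-real zeros of the denominator via real-rootedness and Sturm-type interlacing---goes beyond what the paper carries out and would deliver the sharper $p_n\sim C\,(z^*)^{-n}$. Two remarks are worth making. First, for the bare exponential rate you do not need to exclude complex singularities on $|z|=z^*$: Pringsheim already forces the radius of convergence to equal the smallest positive real singularity, so your ``main obstacle'' only matters for the refined asymptotics (and for upgrading $\limsup$ to $\lim$, a point the paper itself leaves tacit). Second, your justification of simplicity (``$\sin_q$ and $\cos_q$ cannot vanish simultaneously, hence the derivative of the denominator is nonzero'') implicitly uses the classical relation $\cos'=-\sin$, which fails for the Jackson $q$-versions when $q\neq1$; simplicity does hold, but a cleaner route is through the representation of Lemma~\ref{lem:tan_q}, whose inner argument $\sum_j\arctan((1-q)uq^j)$ is strictly increasing in $u$ and therefore crosses each level $\pi/2+k\pi$ transversally.
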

In particular, when $\parxi=\frac12$, the persistence exponent equals $\frac{1}{2\arctan_{-\theta}(1)}$, which motivates introducing the following $q$-analogue: $\bigl[\tfrac{\pi}{4}\bigr]_q := \arctan_q(1)$.

\paragraph{Related work.} Persistence probabilities of moving average processes have been the subject of a few studies already. The most important reference for the present paper is \cite{MaDh-01}, where some general facts are collected and the case of symmetric uniformly distributed innovations is studied. Other references are 
\cite{Krishna2016Persistence,aurzadabothe} and \cite{AurzadaRaschel2025a}, that consider the case of Gaussian and general uniform innovations, respectively.

A process that is related to MA sequences is an autoregressive (AR) process. Here, the reference closest to the present paper is \cite{larralde04}, where AR processes with symmetric Laplace innovations are considered. Other references are \cite{novikov08a, novikov08b,baumgarten14,kettner19,BMMi-10, VyWa-23,AlBoRaSi-23}.

In the present paper, we obtain explicit formulas for the persistence probabilities, which is rather unusual. Often, the focus is on persistence exponents, i.e.\ the exponential decay rate, only. An important reference for characterizing persistence exponents of Markov chains via spectral properties is \cite{AuMuZe-21}, which considers both MA and AR processes as well as more general Markov chains.

\paragraph{Extensions.} It seems that an obvious trail for further research is to investigate whether the techniques from this paper can be applied to shifted Laplace distributions or bi-exponential distribution with different tails. The present paper as well as some recent others (e.g.\ \cite{AlBoRaSi-23,AurzadaRaschel2025a}) have shown tight connections between persistence probabilities and classical combinatorial quantities. It seems that this connection can be exploited in both ways also in other setups.

\paragraph{Outline.} This paper is structured as follows: Section~\ref{sec:positive} contains the proofs of the symmetries claimed in the first items of the main theorems, respectively, and the proofs of all the facts in the case $\theta>0$. This is complemented by Section~\ref{sec:negative}, where all the proofs for the results in the case $\theta<0$ are collected.

\section{Proofs for the case \texorpdfstring{$\theta>0$}{of positive theta}} \label{sec:positive}

We start with the proofs of the symmetries for the persistence probabilities, i.e.\ the first items in Theorem~\ref{thm:maintheoremthetapos} and~\ref{thm:gfthetaneg}, respectively.

\begin{proof}[Proof of Theorem~\ref{thm:maintheoremthetapos}\ref{thm1-ita} and Theorem~\ref{thm:gfthetaneg}\ref{thm2-itf}]
    Let $\theta>0$. Then
    \begin{eqnarray*}
        p_n^{\theta,\parxi}
        &=&
        \P( X_i> \theta X_{i-1}, i=1,\ldots, n )
        \\
        &=&
        \P(  \theta^{-1} X_i> X_{i-1}, i=1,\ldots, n )
        \\
        &=&
        \P(  \theta^{-1} (-X_i)< (-X_{i-1}), i=1,\ldots, n )
        \\
        &=&
        \P(  (-X_{i-1}) > \theta^{-1} (-X_i), i=1,\ldots, n)
        \\
        &=&
        \P(  (-X_{n-i}) > \theta^{-1} (-X_{n-i+1}), i=1,\ldots, n)        
        \\
      &=&
        \P( Y_i > \theta^{-1} Y_{i-1}, i=1,\ldots, n )
        \\
        &=&
        p_n^{1/\theta,1-\parxi},
    \end{eqnarray*}
    where we set $Y_i:=-X_{n-i}$, $i=0,\ldots,n$, and observe that the $(Y_0,\ldots,Y_{n})$ are i.i.d.\ Laplace with density $(1-\parxi) e^x \ind_{x<0} + \parxi e^{-x}\ind_{x>0}$, $x\in\R$.
    For $\theta<0$, the computation can be adapted; the sign change from $>$ to $<$ occurs already when dividing by $\theta$ so that no passing from $X_i$ to $-X_i$ is necessary. We can put $Y_i:=X_{n-i}$, $i=0,\ldots,n$, in the above argument and can observe that the $(Y_0,\ldots,Y_{n})$ are i.i.d.\ Laplace with density as in \eqref{eqn:densityform}.
\end{proof}

We can now start with the considerations for the case $\theta>0$. Let us consider the following auxiliary function
\begin{equation}
    \label{eq:auxiliary_function}p_n(z):=\P( X_1 > \theta z, X_2 > \theta X_1, \ldots, X_n > \theta X_{n-1} ),\qquad n=1,2,\ldots.
\end{equation}

Clearly, when $\theta>0$, the function $z\mapsto p_n(z)$ is continuous and decreasing with $p_n(+\infty)=0$ and $p_n(-\infty)=p_{n-1}$ for $n\geq 2$.
We start with treating $p_n(z)$ for positive $z$ and obtain an explicit formula for these functions.

\begin{lem} Let $\theta>0$ and $\parxi\in[0,1]$. Then, for all $n\geq 1$,
\begin{eqnarray*}
p_n(z) & =&  c_n e^{-\alpha_n z},\qquad z\geq 0,
\end{eqnarray*}
where $\alpha_1:=\theta$, $c_1:=1-\parxi$, and
$$
\alpha_n := (\alpha_{n-1}+1)\theta = \sum_{i=1}^{n} \theta^i,\qquad c_n:= \frac{c_{n-1}(1-\parxi)}{\alpha_{n-1}+1} = \left(1-\parxi\right)^n\cdot\prod_{j=1}^{n-1} \frac{1}{\sum_{i=0}^j \theta^i}=\frac{\left(1-\parxi\right)^n}{[n]_\theta!}.
$$
\end{lem}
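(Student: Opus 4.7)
The plan is to prove the lemma by induction on $n$, using the obvious recursion obtained by conditioning on $X_1$:
\begin{equation*}
p_n(z) = \int_{\theta z}^\infty f(x_1)\, p_{n-1}(x_1)\, \dd x_1,
\end{equation*}
where $f$ denotes the density in \eqref{eqn:densityform} and we have relabelled $X_2,\dots,X_n$ as $X_1,\dots,X_{n-1}$ using the i.i.d.\ property. The crucial observation is that whenever $z\geq 0$ (and $\theta>0$), the lower limit $\theta z$ is nonnegative, so the integration runs entirely over $x_1\geq 0$, where the density simplifies to $f(x_1)=(1-\parxi)e^{-x_1}$. This means the argument never needs to see the more delicate behaviour of $p_{n-1}$ for negative arguments.

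For the base case $n=1$, a direct computation gives $p_1(z)=\int_{\theta z}^\infty (1-\parxi) e^{-x}\,\dd x = (1-\parxi)e^{-\theta z}$, which matches $c_1 e^{-\alpha_1 z}$. For the inductive step, assuming $p_{n-1}(x)=c_{n-1}e^{-\alpha_{n-1}x}$ for $x\geq 0$, I plug into the recursion and obtain
\begin{equation*}
p_n(z) = (1-\parxi)c_{n-1}\int_{\theta z}^\infty e^{-(1+\alpha_{n-1})x_1}\,\dd x_1 = \frac{(1-\parxi)c_{n-1}}{1+\alpha_{n-1}}\, e^{-\theta(1+\alpha_{n-1})z},
\end{equation*}
which is $c_n e^{-\alpha_n z}$ exactly with the recursions $\alpha_n = \theta(1+\alpha_{n-1})$ and $c_n = (1-\parxi)c_{n-1}/(1+\alpha_{n-1})$ as claimed.

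It then remains to verify the closed-form expressions. Unfolding $\alpha_n = \theta(1+\alpha_{n-1})$ with $\alpha_1=\theta$ yields at once $\alpha_n = \theta + \theta^2 + \cdots + \theta^n$, so that $1+\alpha_j = \sum_{i=0}^j \theta^i = (1-\theta^{j+1})/(1-\theta)$. For $c_n$, iterating the recursion gives
\begin{equation*}
c_n = (1-\parxi)^n \prod_{j=1}^{n-1} \frac{1}{1+\alpha_j} = (1-\parxi)^n \prod_{j=1}^{n-1} \frac{1-\theta}{1-\theta^{j+1}} = (1-\parxi)^n \prod_{k=2}^{n} \frac{1-\theta}{1-\theta^k},
\end{equation*}
and recognising the last product as $1/[n]_\theta!$ (the factor $k=1$ is just $1$) finishes the identification $c_n=(1-\parxi)^n/[n]_\theta!$.

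There is no real obstacle here; the argument is essentially a one-line exponential calculation driven by the memoryless structure of the right tail of the Laplace density. The only slight subtlety is restricting to $z\geq 0$ so that the integrand stays in the exponential regime of $f$; for negative $z$ one would need a separate formula, which is why the lemma is stated only for $z\geq 0$.
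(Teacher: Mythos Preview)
Your proof is correct and follows essentially the same approach as the paper: an induction on $n$ via conditioning on $X_1$, using that for $z\geq 0$ the integration range lies entirely in the regime where the density is $(1-\parxi)e^{-x_1}$. You additionally spell out the verification of the closed forms for $\alpha_n$ and $c_n$, which the paper leaves implicit.
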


We note that for $\theta=1$ we have $\alpha_n=n$ and $c_n=\left(1-\parxi\right)^n \frac{1}{n!}$.

\begin{proof}
The proof is by induction. For $n=1$, the claim follows from $p_1(z)=\P(X_1>\theta z)=(1-\parxi)\,e^{-\theta z}$ so that indeed $c_1=1-\parxi$ and $\alpha_1=\theta$.
When $n\geq 2$, we condition on $X_1$ and use the induction hypothesis to get
\begin{align*}
p_n(z) &=\int_{\theta z}^\infty p_{n-1}(x_1) e^{-x_1} (1-\parxi) \dd x_1 = \int_{\theta z}^\infty c_{n-1} e^{-\alpha_{n-1}x_1} e^{-x_1}(1-\parxi) \dd x_1 
\\
&= \frac{c_{n-1}(1-\parxi)}{\alpha_{n-1}+1} \, e^{-(\alpha_{n-1}+1)\theta z}. \qedhere
\end{align*}
\end{proof}

In the next step, we treat $p_n(z)$ for negative $z$. Since the functions $z\mapsto p_n(z)$ are continuous, in particularly continuously connected at $z=0$, we get from the last lemma that $p_n(0) = c_n$. For negative $z$, we will obtain an explicit formula for $p_n(z)$, but some coefficients will be defined through a recursion.

\begin{lem} \label{lem:pnzpsozneg}
Let $\theta>0$ and $\parxi\in[0,1]$. Then, for all $n\geq 1$,
\begin{eqnarray} \label{eqn:explicitpntheatposzneg}
p_n(z) & =& \sum_{j=1}^n \frac{(-1)^{j-1} \parxi^{j-1}r_{n-j+1}}{\prod_{m=1}^{j-1} (\alpha_m + 1)} \left(1 - e^{\alpha_j z} \right)+c_n,
\qquad z\leq 0,
\end{eqnarray}
where $r_1:=\parxi$ and the further $r_n$ are defined via the recursion
\begin{equation} \label{eqn:recursionqnRR}
r_n := \sum_{j=1}^{n-1} \frac{ (-1)^{j-1}\parxi^j r_{n-j} }{\prod_{m=1}^{j-1} (\alpha_m + 1)}  + c_{n-1} \parxi,\qquad n\geq 2.
\end{equation}
\end{lem}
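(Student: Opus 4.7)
The plan is to induct on $n$ and, at each step, condition on $X_1$. Since $z \leq 0$ and $\theta > 0$ give $\theta z \leq 0$, the condition $X_1 > \theta z$ splits the range of $X_1$ into $[\theta z, 0)$, where the density equals $\parxi e^{x_1}$, and $[0,\infty)$, where it equals $(1-\parxi)e^{-x_1}$. Accordingly,
\[
p_n(z) = \int_{\theta z}^0 p_{n-1}(x)\,\parxi e^{x}\,\dd x + \int_0^\infty p_{n-1}(x)\,(1-\parxi)e^{-x}\,\dd x.
\]
The second integral, by the explicit formula for $p_{n-1}$ on the positive half-line from the previous lemma, is independent of $z$ and evaluates to $\frac{c_{n-1}(1-\parxi)}{\alpha_{n-1}+1} = c_n$, which supplies the constant term of \eqref{eqn:explicitpntheatposzneg}. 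The base case $n=1$ follows at once from a direct computation: $p_1(z) = \parxi(1-e^{\theta z}) + (1-\parxi) = r_1(1-e^{\alpha_1 z}) + c_1$.

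For the inductive step I substitute the induction hypothesis for $p_{n-1}(x)$ (used now on the interval $[\theta z, 0]$) into the first integral. Only two kinds of antiderivatives appear, namely $\int_{\theta z}^0 e^x\,\dd x = 1 - e^{\alpha_1 z}$ and $\int_{\theta z}^0 e^{(\alpha_j+1)x}\,\dd x = \frac{1}{\alpha_j+1}\bigl(1-e^{\alpha_{j+1}z}\bigr)$, where the identity $\alpha_{j+1} = (\alpha_j+1)\theta$ from the previous lemma is used decisively: it turns the exponent produced by integration into the $\alpha_{j+1}$ that actually appears in the target formula. After expanding and collecting like terms, the resulting expression is a linear combination of the functions $1 - e^{\alpha_k z}$ for $k = 1, \dots, n$, plus the constant $c_n$ already identified.

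To close the induction it suffices to match the coefficient of each $1 - e^{\alpha_k z}$ against \eqref{eqn:explicitpntheatposzneg}. For $k \geq 2$, the coefficient comes from the single contribution with $j = k-1$ in the expansion of $p_{n-1}$, and a short check on the signs and on the factor $\prod_{m=1}^{k-1}(\alpha_m+1)$ shows agreement with the stated formula. The case $k = 1$ is the crux: the coefficient of $1 - e^{\alpha_1 z}$ collects contributions from \emph{every} term in the expansion of $p_{n-1}$ together with the constant $\parxi c_{n-1}$ produced by the $c_{n-1}$-piece, and equating the resulting sum with the required $r_n$ recovers precisely the recursion \eqref{eqn:recursionqnRR}. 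In other words, the recursion defining $r_n$ is forced by the requirement that the induction close at $k=1$, and this identification is the only non-mechanical point in the argument; the remainder is bookkeeping of signs, powers of $\parxi$, and the index shift from $\alpha_j + 1$ to $\alpha_{j+1}$.
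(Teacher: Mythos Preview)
Your proposal is correct and follows essentially the same route as the paper: induction on $n$, conditioning on $X_1$ with the split at $0$, identifying the $[0,\infty)$-integral as the constant $c_n$, substituting the induction hypothesis on $[\theta z,0]$, and then matching the coefficients of $1-e^{\alpha_k z}$, with the $k=1$ case forcing the recursion \eqref{eqn:recursionqnRR}. The only cosmetic difference is that the paper writes the constant piece as $p_n(0)$ (already known to equal $c_n$ by continuity from the previous lemma), whereas you compute the positive-half-line integral directly as $\frac{c_{n-1}(1-\parxi)}{\alpha_{n-1}+1}=c_n$.
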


\begin{proof} The proof is by induction. For $n=1$, observe that
$$
p_1(z)=\P( X_1 > \theta z) =\P(X_1 \in(\theta z,0)) +1-\parxi = \parxi(1-e^{\theta z}) +1-\parxi = r_1(1-e^{\alpha_1 z}) + c_1,
$$
as required, because $r_1=\parxi$ and 
$c_1=1-\parxi$. The main observation is
    $$
    p_n(z)
    =
    \int_{\theta z}^0 p_{n-1}(x_1) e^{x_1} \,\parxi\dd x_1 + \int_0^\infty p_{n-1}(x_1) e^{-x_1}\, (1-\parxi)\dd x_1 = \int_{\theta z}^0 p_{n-1}(x_1) e^{x_1} \,\parxi\dd x_1 + p_n(0).
$$
Using this and the induction hypothesis, we have, for $n\geq 2$,
\begin{eqnarray*}
p_n(z)&=& 
\int_{\theta z}^0 p_{n-1}(x_1) e^{x_1} \,\parxi\dd x_1+p_n(0)
\\
&=&
\int_{\theta z}^0 \left( \sum_{j=1}^{n-1}  \frac{(-1)^{j-1}\parxi^{j-1} r_{(n-1)-j+1}}{\prod_{m=1}^{j-1} (\alpha_m + 1)} \left(1 - e^{\alpha_j x_1} \right)+c_{n-1}\right) e^{x_1} \,\parxi\dd x_1+c_n
\\
&=&
 \sum_{j=1}^{n-1}  \frac{(-1)^{j-1}\parxi^{j-1} r_{n-j}}{\prod_{m=1}^{j-1} (\alpha_m + 1)} \parxi  \left( 1-e^{\theta z} - \frac{1}{\alpha_j+1} (1- e^{(\alpha_j +1)\theta z})\right) + c_{n-1}\parxi (1-e^{\theta z} )+c_n
 \\
&=&
 \left[ \sum_{j=1}^{n-1} \frac{(-1)^{j-1} \parxi^{j} r_{n-j}}{\prod_{m=1}^{j-1} (\alpha_m + 1)}  +  c_{n-1}\parxi \right](1-e^{\theta z})
 \\
 &&
 +\sum_{j=1}^{n-1} \frac{(-1)^{j+1-1} \parxi^{j+1-1}r_{n-(j+1)+1}}{\prod_{m=1}^{j} (\alpha_m + 1)} (1- e^{\alpha_{j+1} z})
 +c_n
  \\
&=&
 \left[ \sum_{j=1}^{n-1}  \frac{(-1)^{j-1} \parxi^j r_{n-j}}{\prod_{m=1}^{j-1} (\alpha_m + 1)} +  c_{n-1}\parxi \right](1-e^{\alpha_1 z})
 \\
 &&
 +\sum_{j=2}^{n}  \frac{(-1)^{j-1}\parxi^{j-1} r_{n-j+1}}{\prod_{m=1}^{j-1} (\alpha_m + 1)} (1- e^{\alpha_{j} z})
 +c_n.
\end{eqnarray*}
In the second sum, the terms $j=2,\ldots,n$ already match the required terms in the claim. Finally, the prefactor of $(1-e^{\alpha_1 z})$ is given by $r_n$, as required in \eqref{eqn:explicitpntheatposzneg}, by the definition of the recursion for the $(r_n)$ in \eqref{eqn:recursionqnRR}.
\end{proof}

Recursion \eqref{eqn:recursionqnRR} can be solved for the corresponding generating function, as the next lemma shows.

\begin{lem} \label{eqn:genfunznegthepos}
Let $\theta>0$ and $\parxi\in[0,1]$. Let $R(z):=\sum_{n=1}^\infty r_n z^n$. Then
$$
R(z) = \parxi z\cdot \frac{e_\theta\bigl((1-\parxi)z\bigr)}{e_\theta\bigl(-\parxi z\bigr)}.
$$
\end{lem}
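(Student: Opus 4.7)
The plan is to convert the recursion \eqref{eqn:recursionqnRR} into a functional equation for $R(z)$ by multiplying by $z^n$ and summing, after first rewriting everything in terms of $q$-factorials $[\cdot]_\theta!$.

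First I would identify the two ingredients of the recursion in closed form. From $\alpha_m = \sum_{i=1}^{m}\theta^{i}$, we get $\alpha_m+1 = \sum_{i=0}^{m}\theta^{i} = \frac{1-\theta^{m+1}}{1-\theta}$, so a telescoping yields
\begin{equation*}
\prod_{m=1}^{j-1}(\alpha_m+1) = \prod_{k=2}^{j}\frac{1-\theta^k}{1-\theta} = [j]_\theta!,
\end{equation*}
and from the previous lemma $c_{n-1} = (1-\parxi)^{n-1}/[n-1]_\theta!$. Writing $(-1)^{j-1}\parxi^j = -(-\parxi)^j$ and moving the sum to the left, the recursion \eqref{eqn:recursionqnRR} becomes, for $n\geq 2$,
\begin{equation*}
r_n + \sum_{j=1}^{n-1}\frac{(-\parxi)^{j}\,r_{n-j}}{[j]_\theta!} = \frac{(1-\parxi)^{n-1}\parxi}{[n-1]_\theta!}.
\end{equation*}
A direct check shows the same identity is valid at $n=1$ too (the sum is empty, both sides equal $\parxi = r_1$), so the relation holds for all $n\geq 1$.

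Next I would multiply by $z^n$ and sum over $n\geq 1$. The first term gives $R(z)$; the double sum is a Cauchy product, which after setting $k = n-j$ becomes
\begin{equation*}
\sum_{j\geq 1}\sum_{k\geq 1}\frac{(-\parxi z)^{j}}{[j]_\theta!}\,r_k z^{k} = \bigl(e_\theta(-\parxi z)-1\bigr)\,R(z),
\end{equation*}
so the left-hand side collapses to $e_\theta(-\parxi z)\,R(z)$. The right-hand side is, after reindexing $m=n-1$,
\begin{equation*}
\parxi z\sum_{m=0}^{\infty}\frac{\bigl((1-\parxi)z\bigr)^{m}}{[m]_\theta!} = \parxi z\,e_\theta\bigl((1-\parxi)z\bigr).
\end{equation*}
Dividing through by $e_\theta(-\parxi z)$ (which is a unit in $\mathbb{C}[[z]]$ since its constant term is $1$) yields the claim.

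The only genuinely delicate step is recognizing the product $\prod_{m=1}^{j-1}(\alpha_m+1)$ as $[j]_\theta!$; once this is in place the rest is a routine Cauchy-product manipulation and there is no analytical obstacle, since $R(z)$ is a formal power series and everything is justified coefficient by coefficient (and in fact converges for $|z|<1$ as noted in the paper).
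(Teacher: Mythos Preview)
Your proof is correct and follows essentially the same approach as the paper: multiply the recursion \eqref{eqn:recursionqnRR} by $z^n$, sum, and recognize the resulting Cauchy product in terms of $e_\theta$. The only cosmetic difference is that you first absorb the $r_n$ term into the convolution (after checking the $n=1$ case), obtaining $e_\theta(-\parxi z)\,R(z)=\parxi z\,e_\theta((1-\parxi)z)$ directly, whereas the paper keeps the $r_1 z$ term separate and works with the auxiliary function $f(z)=e_\theta(z)-1$ before rearranging; the underlying computation is identical.
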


\begin{proof} 
Define $P_0:=1$ and, for $n\geq 1$, $P_n:=\prod_{m=1}^n (\alpha_m+1)=[n+1]_\theta!$. We multiply \eqref{eqn:recursionqnRR} by $z^n$, sum in $n$, use $c_n=\left(1-\parxi\right)^n / P_{n-1}$, and obtain
\begin{eqnarray*}
R(z)
& =&
\sum_{n=1}^\infty r_n z^n
=
r_1 z +
\sum_{n=2}^\infty \sum_{j=1}^{n-1} (-1)^{j-1} \parxi^{j} r_{n-j} \frac{1}{P_{j-1}}\, z^n + \sum_{n=2}^\infty c_{n-1} \parxi z^n
\\
&=& \parxi z +
\sum_{j=1}^\infty (-1)^{j-1} \parxi^j \frac{1}{P_{j-1}}\, z^j \cdot \sum_{n=j+1}^\infty r_{n-j}  z^{n-j} + \parxi \,\sum_{n=2}^\infty (1-\parxi)^{n-1} \frac{1}{P_{n-2}} z^{n}
\\
&=& \parxi z +
\sum_{j=1}^\infty (-1)^{j-1} \parxi^j \frac{1}{P_{j-1}}\, z^j \cdot \sum_{n=1}^\infty r_{n}  z^{n} + \parxi  z\,\sum_{n=1}^\infty (1-\parxi)^n \frac{1}{P_{n-1}} z^{n}
\\
&=& \parxi z - f(-\parxi z) \cdot R(z) + \parxi z\, f((1-\parxi)z),
\end{eqnarray*}
with $f(z):=\sum_{n=1}^\infty \frac{1}{P_{n-1}} z^n=e_\theta(z)-1$. Rearranging gives the claim.
\end{proof}

We can now use that $p_{n-1}=p_n(-\infty)$ to obtain the next result, which proves part \ref{thm1-ite} of Theorem~\ref{thm:maintheoremthetapos}.

\begin{cor} \label{cor:lemma77}
 Let $\theta>0$ and $\parxi\in[0,1]$.   We have
\begin{equation}
\label{eqn:gapstpa}
p_{n-1} = \sum_{j=1}^n  \frac{(-1)^{j-1}\parxi^{j-1} r_{n-j+1}}{\prod_{m=1}^{j-1} (\alpha_m + 1)} +c_n,\qquad n\geq 2,
\end{equation}
with the $(r_n)$ defined in \eqref{eqn:recursionqnRR}. Further, the generating function of the $(p_n)$ is given by \eqref{eq:formula_GF_theta>0}.
\end{cor}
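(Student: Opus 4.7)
The plan is to obtain the identity for $p_{n-1}$ by letting $z\to-\infty$ in Lemma~\ref{lem:pnzpsozneg}, and then to derive the generating function identity by a convolution argument. For the first step, since $\theta>0$ the event $\{X_1>\theta z\}$ becomes almost sure as $z\to-\infty$, so by the definition \eqref{eq:auxiliary_function} and the i.i.d.\ stationarity of $(X_i)$,
\begin{equation*}
\lim_{z\to-\infty} p_n(z) = \P(X_2>\theta X_1,\ldots,X_n>\theta X_{n-1}) = p_{n-1}.
\end{equation*}
Moreover, every $\alpha_j$ is strictly positive (being a partial sum of $\theta^i$ with $\theta>0$), hence each $e^{\alpha_j z}$ vanishes in the limit, and \eqref{eqn:explicitpntheatposzneg} collapses directly to the announced formula \eqref{eqn:gapstpa}.

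For the generating function, I would rewrite $p_n=p_{(n+1)-1}$ using \eqref{eqn:gapstpa} with $n$ replaced by $n+1$ and recognize the inner sum as a convolution coefficient. Setting
\begin{equation*}
a_j:=\frac{(-\parxi)^{j-1}}{\prod_{m=1}^{j-1}(\alpha_m+1)},\qquad A(z):=\sum_{j=1}^\infty a_j z^j,\qquad C(z):=\sum_{n=1}^\infty c_n z^n,
\end{equation*}
the inner sum in \eqref{eqn:gapstpa} is exactly the coefficient of $z^{n+2}$ in $A(z)R(z)$. Multiplying \eqref{eqn:gapstpa} by $z^n$ and summing over $n\geq 1$, after the obvious index shifts one arrives at
\begin{equation*}
P(z)=\frac{A(z)R(z)}{z^2}-a_1 r_1+\frac{C(z)}{z}-c_1=\frac{A(z)R(z)}{z^2}+\frac{C(z)}{z}-1,
\end{equation*}
using $a_1 r_1+c_1=\parxi+(1-\parxi)=1$.

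The three generating functions can then be written down explicitly. The relation $c_n=(1-\parxi)^n/[n]_\theta!$ yields $C(z)=e_\theta((1-\parxi)z)-1$; the same identification $\prod_{m=1}^{j-1}(\alpha_m+1)=[j]_\theta!$ gives, after pulling out a factor of $-\parxi$, the closed form $A(z)=(1-e_\theta(-\parxi z))/\parxi$; and $R(z)$ is exactly the content of Lemma~\ref{eqn:genfunznegthepos}. Substituting these three identities,
\begin{equation*}
\frac{A(z)R(z)}{z^2}=\frac{1}{z}\left(\frac{e_\theta((1-\parxi)z)}{e_\theta(-\parxi z)}-e_\theta((1-\parxi)z)\right),
\end{equation*}
and the offending $e_\theta((1-\parxi)z)/z$ summand cancels against the identical term appearing in $C(z)/z$; what remains is exactly \eqref{eq:formula_GF_theta>0}. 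The degenerate cases $\parxi=0$ and $\parxi=1$ follow by continuity in $\parxi$ or from the direct evaluations already recorded in Theorem~\ref{thm:maintheoremthetapos}\ref{thm1-itc}.

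The main obstacle, as is typical for this kind of convolution manipulation, is the bookkeeping of the several index shifts: the passage from $p_{n-1}$ to $p_n$, together with the shift of a convolution starting at index $2$ down to a power series starting at index $1$, produces small boundary corrections $-a_1 r_1$ and $-c_1$ that must conspire into the single $-1$ appearing on the right-hand side of \eqref{eq:formula_GF_theta>0}. Once those boundary terms are tracked correctly, the final cancellation is automatic thanks to the multiplicative structure of $A(z)$ and $R(z)$.
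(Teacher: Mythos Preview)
Your argument is correct and follows essentially the same route as the paper: the identity \eqref{eqn:gapstpa} comes from $p_{n-1}=p_n(-\infty)$ in Lemma~\ref{lem:pnzpsozneg}, and the generating-function formula is obtained by recognizing the convolution structure, introducing the same auxiliary series (your $A(z)=-\frac{1}{\parxi}f(-\parxi z)$ and $C(z)=f((1-\parxi)z)$ in the paper's notation), and plugging in Lemma~\ref{eqn:genfunznegthepos}. The only cosmetic difference is that you shift indices to sum $p_n z^n$ directly whereas the paper sums $p_{n-1}z^n$ to obtain $zP(z)$; the boundary bookkeeping and final cancellation are identical.
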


\begin{proof} Setting $z=-\infty$ in \eqref{eqn:explicitpntheatposzneg} and using $p_{n-1}=p_n(-\infty)$ we obtain \eqref{eqn:gapstpa}.
In order to show the result \eqref{eq:formula_GF_theta>0} for the generating function, let us first treat the case $\parxi=0$.
Here, from Lemma~\ref{lem:pnzpsozneg}, $p_{n-1}=c_n=1/P_{n-1}$, where we recall from the proof of Lemma~\ref{eqn:genfunznegthepos} the notation $P_n=\prod_{m=1}^n (\alpha_m+1)$; so that the claim is proved immediately. When $\parxi\neq 0$, we can argue as follows. Multiplying \eqref{eqn:gapstpa} by $z^n$ and summing in $n$ gives
\begin{align*}
z P(z)
&= \sum_{n=2}^\infty p_{n-1} z^n = \sum_{n=2}^\infty \sum_{j=1}^n  \frac{(-1)^{j-1} \parxi^{j-1} r_{n-j+1}}{P_{j-1}} z^n +\sum_{n=2}^\infty c_n z^n
\\
&= \sum_{n=1}^\infty \sum_{j=1}^n  \frac{(-1)^{j-1} \parxi^{j-1} r_{n-j+1}}{ P_{j-1}} z^n -r_1 z+\sum_{n=1}^\infty c_n z^n - c_1 z
\\
&=  \sum_{j=1}^\infty \frac{(-1)^{j-1} \parxi^{j-1}}{P_{j-1}} z^j \cdot \sum_{n=j}^\infty r_{n-j+1} z^{n-j}-\parxi z +\sum_{n=1}^\infty \frac{(1-\parxi)^n}{P_{n-1}} z^n-(1-\parxi) z
\\
&=  -\frac{1}{\parxi} f(-\parxi z) \cdot R(z) z^{-1} +f((1-\parxi)z)-z
\\
&=  -\frac{1}{\parxi} f(-\parxi z) \cdot \parxi z\cdot \frac{1  + f((1-\parxi)z)}{1+f(-\parxi z)}\, z^{-1}+f((1-\parxi)z)\cdot \frac{1+f(-\parxi z)}{1+f(-\parxi z)}-z
\\
&=  - f(-\parxi z) \cdot \frac{1  +f((1-\parxi)z)}{1+f(-\parxi z)}+f((1-\parxi)z)\cdot \frac{1+f(-\parxi z)}{1+f(-\parxi z)}-z
\\
&=   \frac{f((1-\parxi)z)- f(-\parxi z)}{1+f(-\parxi z)}-z, 
\end{align*}
where, as in the proof of Lemma~\ref{eqn:genfunznegthepos}, we have set $f(z):=e_\theta(z)-1$.
\end{proof}




We can now finally prove part \ref{thm1-itd} of Theorem~\ref{thm:maintheoremthetapos}.

\begin{cor} Let $\theta>0$ and $\theta\neq 1$. Let $\xi\in[0,1)$. Then the persistence probabilities are given by (\ref{eqn:ppforthetappositive}).
For $0<\theta<1$, the generating function can be expressed by
\begin{equation}
    \label{eq:expression_P_ratio_Pochhammer}
    P(z)
=
\frac{1}{z} \left( \frac{((1-\theta)(-\parxi z);\theta)_\infty}{((1-\theta)(1-\parxi) z;\theta)_\infty} -1\right) - 1.
\end{equation}
\label{cor:nonameimp}
\end{cor}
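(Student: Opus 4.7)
The plan is to deduce both assertions of the corollary from the generating function identity \eqref{eq:formula_GF_theta>0} (equivalently, the final line of Corollary~\ref{cor:lemma77}) together with two classical $q$-series identities: Euler's product representation of the $q$-exponential
\begin{equation*}
e_q(z)\;=\;\frac{1}{((1-q)z;q)_\infty}\qquad(|q|<1),
\end{equation*}
and the $q$-binomial theorem
\begin{equation*}
\frac{(aw;q)_\infty}{(w;q)_\infty}\;=\;\sum_{n=0}^\infty \frac{(a;q)_n}{(q;q)_n}\,w^n.
\end{equation*}
Both hold for $|q|<1$ and $|w|<1$, which is exactly the regime we need when $0<\theta<1$.

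First I would substitute Euler's identity into \eqref{eq:formula_GF_theta>0} applied to $e_\theta((1-\parxi)z)$ and $e_\theta(-\parxi z)$. The two product representations combine to give precisely the ratio of infinite Pochhammer symbols appearing in \eqref{eq:expression_P_ratio_Pochhammer}, thereby proving the second statement. Then, to extract coefficients, I would apply the $q$-binomial theorem with $q=\theta$, $w=(1-\theta)(1-\parxi)z$ and $a=-\parxi/(1-\parxi)$, so that $aw=(1-\theta)(-\parxi z)$ matches the numerator Pochhammer symbol. This yields the explicit series
\begin{equation*}
\frac{((1-\theta)(-\parxi z);\theta)_\infty}{((1-\theta)(1-\parxi)z;\theta)_\infty}
=\sum_{n=0}^\infty \frac{(-\parxi/(1-\parxi);\theta)_n}{(\theta;\theta)_n}\,(1-\theta)^n(1-\parxi)^n z^n.
\end{equation*}
A quick check shows the $n=0$ and $n=1$ terms equal $1$ and $z$, which cancel against the ``$-1-z$'' in $zP(z)=\text{ratio}-1-z$; reading off the coefficient of $z^{n+1}$ on the right and equating with $p_n$ on the left gives \eqref{eqn:ppforthetappositive}.

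This covers the case $0<\theta<1$. To extend \eqref{eqn:ppforthetappositive} to $\theta>1$, I would invoke the duality $p_n^{\theta,\parxi}=p_n^{1/\theta,1-\parxi}$ of Theorem~\ref{thm:maintheoremthetapos}\ref{thm1-ita} and verify that the closed form \eqref{eqn:ppforthetappositive} is itself invariant under $(\theta,\parxi)\mapsto(1/\theta,1-\parxi)$. The key algebraic identity is
\begin{equation*}
(a;q^{-1})_n\;=\;(-a)^n q^{-\binom{n}{2}}\,(a^{-1};q)_n,
\end{equation*}
which follows directly from $1-aq^{-k}=-aq^{-k}(1-a^{-1}q^k)$. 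Applying this to both $(-(1-\parxi)/\parxi;1/\theta)_{n+1}$ and $(1/\theta;1/\theta)_{n+1}$, together with the simple computations $(1-1/\theta)^{n+1}=\theta^{-(n+1)}(-1)^{n+1}(1-\theta)^{n+1}$ and $(n+1)(n+2)/2-\binom{n+1}{2}=n+1$, causes all $\theta$-powers and sign factors to cancel, leaving exactly the right-hand side of \eqref{eqn:ppforthetappositive}.

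I expect the two potential obstacles to be minor bookkeeping: (i) the numerator and denominator of \eqref{eq:formula_GF_theta>0} both have constant term $1$, so care is needed when matching series and subtracting the $-1-z$ correction; and (ii) tracking signs and $\binom{n}{2}$ exponents in the duality check. Neither appears to pose any conceptual difficulty, as the identity on Pochhammer symbols under $q\mapsto q^{-1}$ does all the work.
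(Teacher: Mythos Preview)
Your proposal is correct and follows essentially the same route as the paper: substitute Euler's product formula for $e_\theta$ into \eqref{eq:formula_GF_theta>0} to obtain the infinite Pochhammer ratio, expand that ratio by the $q$-binomial theorem to read off the $p_n$ for $0<\theta<1$, and then extend to $\theta>1$ via the duality of Theorem~\ref{thm:maintheoremthetapos}\ref{thm1-ita} together with the identity $(a;q^{-1})_n=(-a)^n q^{-\binom{n}{2}}(a^{-1};q)_n$. The paper merely re-derives Euler's formula from the explicit form of the $P_n$ rather than quoting it, but the substance is identical.
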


Let us give a few remarks about the borderline cases. The case $\theta=1$ in item \ref{thm1-itb} of Theorem~\ref{thm:maintheoremthetapos} follows as continuity extension of the formulas in Corollary~\ref{cor:nonameimp}.


Similarly, we can get item  \ref{thm1-itc} of Theorem~\ref{thm:maintheoremthetapos}: The case $\parxi=1$ follows as a continuity extension from the formulas in Corollary~\ref{cor:nonameimp} for $\parxi<1$ with $\parxi\nearrow 1$ or, alternatively, via the formula $p_n^{\theta,\parxi}=p_n^{1/\theta,1-\parxi}$. The result is
\begin{equation*} 
    p_n^{\theta,1} = p_n^{1/\theta,0} = \frac{\theta^{n(n+1)/2}}{(\theta;\theta)_{n+1}}\, (1-\theta)^{n+1},\qquad n\geq 1.
\end{equation*}
The case $\parxi=0$ is included in Corollary~\ref{cor:nonameimp} and boils down to the formula mentioned in item  \ref{thm1-itc} of Theorem~\ref{thm:maintheoremthetapos}. 

\begin{proof}[Proof of Corollary~\ref{cor:nonameimp}] Let us first consider the case $0<\theta<1$. Recall that $\alpha_m+1=\sum_{i=0}^m \theta^i=\frac{1-\theta^{m+1}}{1-\theta}$. Therefore, $P_n:=\prod_{i=1}^n (1+\alpha_m)=\frac{\prod_{m=1}^n (1-\theta \theta^m)}{(1-\theta)^n} = \frac{(\theta;\theta)_{n+1}}{(1-\theta)^{n+1}}$. Thus, by the $q$-binomial theorem, we have
\begin{eqnarray*}
f(z)&=&\sum_{n=1}^\infty \frac{1}{P_{n-1}}\,z^n = \sum_{n=1}^\infty \frac{(1-\theta)^n}{(\theta;\theta)_n}\,z^n = \sum_{n=0}^\infty \frac{((1-\theta)z)^n}{(\theta;\theta)_n} - 1 = \frac{1}{((1-\theta)z;\theta)_\infty} -1.
\end{eqnarray*}
Therefore, using \eqref{eq:formula_GF_theta>0} (i.e.\ Corollar~\ref{cor:lemma77}) in the first step and the $q$-binomial theorem in the fourth step, we obtain
\begin{eqnarray}
P(z) = \sum_{n=1}^\infty p_n z^n \nonumber
&=& \frac{1}{z} \left( \frac{1+f((1-\parxi) z))}{1+f(-\parxi z)} -1\right) - 1\nonumber
\\
&=& \frac{1}{z} \left( \frac{((1-\theta)(-\parxi z);\theta)_\infty}{((1-\theta)(1-\parxi) z;\theta)_\infty} -1\right) - 1\label{eq:ration_Poch_infty}
\\
&=&
\frac{1}{z} \left( \sum_{n=0}^\infty \frac{(-\frac{\parxi}{1-\parxi};\theta)_n}{(\theta;\theta)_n}\, \left((1-\theta)(1-\parxi)z \right)^{n} -1\right) - 1\nonumber
\\
&=&
\sum_{n=1}^\infty \frac{(-\frac{\parxi}{1-\parxi};\theta)_{n+1}}{(\theta;\theta)_{n+1}}\,  (1-\theta)^{n+1}(1-\parxi)^{n+1}  z^n. \nonumber
\end{eqnarray}
Now, let $\theta>1$. Then $p_n^{1/\theta,1-\parxi}=p_n^{\theta,\parxi}$, and the same holds for the right hand-side in the claim, as can be checked:
\begin{equation*}
    \frac{(-\frac{1-\parxi}{\parxi};\frac{1}{\theta})_{n+1}}{(\frac{1}{\theta};\frac{1}{\theta})_{n+1}}\, \left(1-\frac{1}{\theta}\right)^{n+1} \parxi^{n+1}
    =
    \frac{(-\frac{\parxi}{1-\parxi};\theta)_{n+1}}{(\theta;\theta)_{n+1}}\,(1-\theta)^{n+1} (1-\parxi)^{n+1},
\end{equation*}
using the symmetry formula 
    $(a;\frac{1}{q})_n = (-a)^n q^{-\frac{n(n-1)}{2}}(\frac{1}{a};q)_n$.
Note that we could not treat the case $\theta>1$ directly, because the infinite $q$-Pochhammer symbols, e.g.\ $((1-\theta)z;\theta)_\infty$, are not properly defined for $\theta>1$.
\end{proof}

\section{Proofs for the case \texorpdfstring{$\theta<0$}{of negative theta}} \label{sec:negative}
We recall, for $\theta<0$, the borderline cases $\parxi=0$ and $\parxi=1$ are trivial because we have $p_n^{\theta,0}=1$ and $p_n^{\theta,1}=0$ for all $n$. The main result, Theorem~\ref{thm:gfthetaneg}, trivially holds for these cases. Thus, we will exclude them in the subsequent considerations.

We shall again use the auxiliary function $p_n(z)$ as defined in \eqref{eq:auxiliary_function}.
Clearly, for $\theta<0$, the function $z\mapsto p_n(z)$ is increasing with $p_n(-\infty)=0$ and $p_n(+\infty)=p_{n-1}$ for $n\geq 2$.

We start with explicit formulas for $p_n(z)$ for negative and positive $z$. These formulas involve sequences $(\alpha_n)$, $(r_i^n)$, $(s_i^n)$, and $(q_n)$, which are defined recursively below.

\begin{lem} Let $\theta<0$ and $\parxi\in(0,1)$. Set
$
\alpha_1:=\theta$ and for $n\geq 2$, $\alpha_n:=(1-\alpha_{n-1})\theta
$.
Then, for any $n\geq 1$,
\begin{equation}
\label{eqn:thetanetznegnasatz}
p_n(z) = \sum_{j=1}^n \frac{(-1)^{j-1}(1-\parxi)^{j-1} r_{n-j+1}^{n}}{\prod_{m=1}^{j-1} (1-\alpha_m)} e^{-\alpha_j z},\qquad z\leq 0,
\end{equation}
and
\begin{equation} \label{eqn:thetanetzposnasatz}
p_n(z) = \sum_{j=1}^n \frac{(-1)^{j-1}\parxi^{j-1}s_{n-j+1}^{n}}{\prod_{m=1}^{j-1} (1-\alpha_m)} (1-e^{\alpha_j z}) + q_n,\qquad z\geq 0,
\end{equation}
where $r_1^{1}:=1-\parxi$, $s_1^{1}:=\parxi$, $q_1:=1-\parxi$, while, for $n\geq 2$, we set $s_n^n:=0$ and 
\begin{equation} \label{eqn:recursionrvss}
r_{i}^n:=\left( \frac{\parxi}{1-\parxi}\right)^{n-i-1} s_{i}^{n-1}, \quad s_i^n:=(-1)\left( \frac{1-\parxi}{\parxi}\right)^{n-i-1} r_i^{n-1},\quad i=1,\ldots,n-1,
\end{equation}
\begin{equation} \label{eqn:recursionr+-}
r_n^n := \sum_{j=1}^{n-1} \frac{(-1)^{j-1} \parxi^{j-1} (1-\parxi) s_{n-j}^{n-1}}{\prod_{m=1}^{j-1} (1-\alpha_m)}+ q_{n-1}(1-\parxi)
\end{equation}
and
\begin{equation} \label{eqn:recursionqn}
q_n:= r_n^n- \sum_{j=1}^{n-1} \frac{(-1)^{j-1}\parxi^{j-1} (1-\parxi) s_{n-j}^{n-1}}{\prod_{m=1}^{j} (1-\alpha_m)}.
\end{equation}
\end{lem}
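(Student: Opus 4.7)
I would prove the lemma by induction on $n$. The base case $n=1$ is a direct computation: for $z\leq 0$ we have $\theta z\geq 0$, so $p_1(z)=(1-\parxi)e^{-\theta z}$, matching \eqref{eqn:thetanetznegnasatz} with $r_1^1=1-\parxi$ and $\alpha_1=\theta$; for $z\geq 0$ we have $\theta z\leq 0$, giving $p_1(z)=\parxi(1-e^{\theta z})+(1-\parxi)$, which matches \eqref{eqn:thetanetzposnasatz} with $s_1^1=\parxi$ and $q_1=1-\parxi$.

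The inductive step rests on the conditioning identity
\begin{equation*}
p_n(z) = \int_{\theta z}^\infty p_{n-1}(x_1)\,f(x_1)\,\mathrm{d}x_1,
\end{equation*}
where $f$ is the density in \eqref{eqn:densityform}. The case analysis splits according to the sign of $z$, which controls the sign of $\theta z$ and hence the shape of the integrand. In both sub-cases, the key algebraic identity that lets us match exponentials with the target ansatz is
\begin{equation*}
(1-\alpha_j)\theta = \alpha_{j+1},
\end{equation*}
which is immediate from the definition $\alpha_{n}=(1-\alpha_{n-1})\theta$.

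\emph{Case $z\leq 0$.} Since the integration range lies in $[0,\infty)$, I would use $f(x_1)=(1-\parxi)e^{-x_1}$ and substitute the positive-$z$ formula \eqref{eqn:thetanetzposnasatz} for $p_{n-1}(x_1)$. Integrating each exponential and using $(1-\alpha_j)\theta=\alpha_{j+1}$, all exponentials in $z$ produced are of the form $e^{-\alpha_k z}$ for $k=1,\ldots,n$. Collecting the coefficient of $e^{-\alpha_1 z}$ gives exactly the right-hand side of \eqref{eqn:recursionr+-}, identifying it with $r_n^n$; collecting the coefficient of $e^{-\alpha_j z}$ for $j\geq 2$, after an index shift $j\mapsto j-1$ in the old sum, yields the first half of \eqref{eqn:recursionrvss}.

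\emph{Case $z\geq 0$.} Here I would split the integral at $0$,
\begin{equation*}
p_n(z)=\parxi\int_{\theta z}^0 p_{n-1}(x_1)e^{x_1}\,\mathrm{d}x_1 + (1-\parxi)\int_0^\infty p_{n-1}(x_1)e^{-x_1}\,\mathrm{d}x_1,
\end{equation*}
and insert the negative-$z$ formula \eqref{eqn:thetanetznegnasatz} for $p_{n-1}$ into the first integral. Integrating produces only terms $(1-e^{\alpha_j z})$ with $j\geq 2$, which is consistent with the ansatz and forces $s_n^n=0$; comparing coefficients provides the second half of \eqref{eqn:recursionrvss}. The second integral is the constant $p_n(0)$, and this is what defines $q_n$. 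The compatibility relation \eqref{eqn:recursionqn} then emerges from equating the value of $p_n(0)$ computed from the already-proved negative-$z$ formula with the value $q_n$ read off from the positive-$z$ formula, and simplifying using the freshly established relations between $(r_i^n)$ and $(s_i^{n-1})$.

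\emph{Main obstacle.} The analytic content is elementary integration of exponentials; the real work is the bookkeeping. The only step requiring genuine care is the algebraic verification that the consistency of the two formulas at $z=0$ collapses exactly to \eqref{eqn:recursionqn}, which involves telescoping the contributions from the $r_i^n$ expressed via $s_i^{n-1}$ against the direct expression for $r_n^n$.
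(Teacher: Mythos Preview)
Your proposal is correct and follows essentially the same approach as the paper: induction on $n$, the same base-case computation, the same conditioning identity with the same sign-of-$z$ case split, and the same use of $(1-\alpha_j)\theta=\alpha_{j+1}$ to match exponentials; the identification of $q_n$ via continuity at $z=0$ also coincides with the paper's derivation of \eqref{eqn:recursionqn}.
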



\begin{proof}
The case $n=1$ follows immediately:
$$
p_1(z)=\int_{\theta z}^\infty e^{-x_1} (1-\parxi) \dd x_1 = (1-\parxi) e^{-\theta z},\qquad z\leq 0,
$$
while
$$
p_1(z)=\int_{\theta z}^0  e^{x_1}  \parxi \dd x_1+\int_0^\infty  e^{-x_1}  (1-\parxi) \dd x_1 = \parxi(1- e^{\theta z}) + 1-\parxi,\qquad z\geq 0.
$$
Thus, indeed the initial values are $\alpha_1=\theta$, $r_1^1=1-\parxi$, $s_1^1=\parxi$, and $q_1=1-\parxi$.

For the induction step, we need the following two main observations:
For $z\leq 0$, we have (since $\theta z\geq 0$)
\begin{equation} \label{eqn:pnobsthetanegzneg}
p_n(z) = \int_{\theta z}^\infty p_{n-1}(x_1) e^{-x_1}  (1-\parxi) \dd x_1 .
\end{equation}
Contrary, for $z\geq 0$, we have (since now $\theta z \leq 0$)

\begin{equation} \label{eqn:pnobsthetanegzpos}
p_n(z) = \int_{\theta z}^0 p_{n-1}(x_1) e^{x_1} \parxi \dd x_1  + p_n(0).
\end{equation}

Assume now that  \eqref{eqn:thetanetznegnasatz} and \eqref{eqn:thetanetzposnasatz} hold for the case $n-1$, $n\geq 2$. Then using \eqref{eqn:pnobsthetanegzneg} and the induction hypothesis, we have, for $z\leq 0$,
\begin{eqnarray}
 p_n(z)&=& \int_{\theta z}^\infty p_{n-1}(x_1) e^{-x_1}  (1-\parxi) \dd x_1 
 \notag
 \\
 &=& \int_{\theta z}^\infty \left(\sum_{j=1}^{n-1}\frac{ (-1)^{j-1}\parxi^{j-1} s_{n-1-j+1}^{n-1}}{\prod_{m=1}^{j-1} (1-\alpha_m)} (1-e^{\alpha_j x_1}) + q_{n-1}\right) e^{-x_1}  (1-\parxi) \dd x_1 
 \notag
 \\
 &=& \sum_{j=1}^{n-1} \frac{(-1)^{j-1} \parxi^{j-1} s_{n-j}^{n-1}}{\prod_{m=1}^{j-1} (1-\alpha_m)} \int_{\theta z}^\infty ( e^{-x_1}- e^{-(1-\alpha_j) x_1} )(1-\parxi) \dd x_1  + q_{n-1}\int_{\theta z}^\infty  e^{-x_1} (1-\parxi) \dd x_1 
 \notag
 \\
 &=& \sum_{j=1}^{n-1} \frac{(-1)^{j-1} \parxi^{j-1} (1-\parxi) s_{n-j}^{n-1}}{\prod_{m=1}^{j-1} (1-\alpha_m)} \left( e^{-\theta z} - \frac{1}{1-\alpha_j} \, e^{-(1-\alpha_{j})\theta z} \right) +  q_{n-1} (1-\parxi)  e^{-\theta z}
 \notag
 \\
 &=& \sum_{j=1}^{n-1} \frac{(-1)^{j-1} \parxi^{j-1} (1-\parxi) s_{n-j}^{n-1}}{\prod_{m=1}^{j-1} (1-\alpha_m)} \cdot e^{-\alpha_1 z}
 \notag
 \\
 && + \sum_{j=1}^{n-1} \frac{(-1)^{j+1-1}\parxi^{j-1} (1-\parxi) s_{n-j}^{n-1}}{\prod_{m=1}^{j} (1-\alpha_m)} e^{-\alpha_{j+1} z}  +  q_{n-1}(1-\parxi) e^{-\alpha_1 z}
 \notag
 \\
 &=& \left(\sum_{j=1}^{n-1} \frac{(-1)^{j-1}\parxi^{j-1} (1-\parxi) s_{n-j}^{n-1}}{\prod_{m=1}^{j-1} (1-\alpha_m)}+q_{n-1} (1-\parxi)\right) \cdot e^{-\alpha_1 z}
 \notag
 \\
 && + \sum_{j=2}^{n} \frac{(-1)^{j-1}\parxi^{j-2} (1-\parxi) s_{n-j+1}^{n-1}}{\prod_{m=1}^{j-1} (1-\alpha_m)} e^{-\alpha_{j} z},
 \label{eqn:continuityatzero}
\end{eqnarray}
as required in \eqref{eqn:thetanetznegnasatz}, because $\parxi^{j-2} (1-\parxi) s_{n-j+1}^{n-1}=(1-\parxi)^{j-1} r_{n-j+1}^n$, for $j=2,\ldots,n$, and since \eqref{eqn:recursionr+-} makes the prefactor of $e^{-\alpha_1 z}$ above coincide with the one in \eqref{eqn:thetanetznegnasatz}. On the other hand, for $z\geq 0$, we get from \enlargethispage{\baselineskip} \eqref{eqn:pnobsthetanegzpos} and the induction hypothesis that
\begin{eqnarray*}
p_n(z) &=& 
\int_{\theta z}^0 p_{n-1}(x_1) e^{x_1} \parxi \dd x_1 + p_n(0)
\\
&=& 
\int_{\theta z}^0 \sum_{j=1}^{n-1}  \frac{(-1)^{j-1} (1-\parxi)^{j-1} r_{n-1-j+1}^{n-1}}{\prod_{m=1}^{j-1} (1-\alpha_m)}\, e^{-\alpha_j x_1}  e^{x_1} \parxi \dd x_1 + p_n(0)
\\
&=& 
\sum_{j=1}^{n-1}\frac{ (-1)^{j-1}(1-\parxi)^{j-1} r_{n-j}^{n-1}}{\prod_{m=1}^{j-1} (1-\alpha_m)}\int_{\theta z}^0 e^{(1-\alpha_j) x_1}\parxi \dd x_1+ p_n(0)
\\
&=& 
\sum_{j=1}^{n-1} \frac{(-1)^{j-1} (1-\parxi)^{j-1} \parxi r_{n-j}^{n-1}}{\prod_{m=1}^{j} (1-\alpha_m)}(1-e^{(1-\alpha_j) \theta z}) + p_n(0)
\\
&=& 
\sum_{j=1}^{n-1} \frac{(-1)^{j-1}(1-\parxi)^{j-1} \parxi r_{n-j}^{n-1}}{ \prod_{m=1}^{j} (1-\alpha_m)}(1-e^{\alpha_{j+1} z}) + p_n(0)
\\
&=& 
\sum_{j=2}^{n} \frac{(-1)^{j-1}(-1)(1-\parxi)^{j-2} \parxi  r_{n-j+1}^{n-1}}{\prod_{m=1}^{j-1} (1-\alpha_m)}(1-e^{\alpha_{j} z}) + p_n(0),
\end{eqnarray*}
as required in \eqref{eqn:thetanetzposnasatz}, because $-(1-\parxi)^{j-2} \parxi  r_{n-j+1}^{n-1}=\parxi^{j-1} s_{n-j+1}^n$ for $j=2,\ldots,n$, $s_n^n=0$, and 
\begin{eqnarray*} \label{eqn:recursionqs}
p_n(0) &=& r_n^n+\sum_{j=2}^n  \frac{(-1)^{j-1}\xi^{j-2}(1-\parxi)s_{n-j+1}^{n-1}}{\prod_{m=1}^{j-1} (1-\alpha_m)} 
\\
&=& r_n^n- \sum_{j=1}^{n-1} \frac{(-1)^{j-1}\parxi^{j-1} (1-\parxi) s_{n-j}^{n-1}}{\prod_{m=1}^{j} (1-\alpha_m)}
\\
&=&q_n,
\end{eqnarray*}
using \eqref{eqn:continuityatzero} in the first and \eqref{eqn:recursionqn} in the last step.
\end{proof}

The complicated recursions from the last lemma can be interpreted in terms of generating functions in order to show Theorem~\ref{thm:gfthetaneg}\ref{thm2-ith}.

\begin{proof}[Proof of Theorem~\ref{thm:gfthetaneg}\ref{thm2-ith}] Using $p_n(+\infty)=p_{n-1}$ for $n\geq 2$ as well as \eqref{eqn:thetanetzposnasatz}, we obtain
\begin{eqnarray}
    zP(z)&=&\sum_{n=2}^\infty p_{n-1} z^n 
    \notag \\
    &=& \sum_{n=2}^\infty \left(\sum_{j=1}^n  \frac{(-1)^{j-1} \parxi^{j-1} s_{n-j+1}^n}{ \prod_{m=1}^{j-1} (1-\alpha_m)} + q_n \right) z^n
    \notag \\
     &=& \sum_{n=1}^\infty \sum_{j=1}^n  \frac{(-1)^{j-1}\parxi^{j-1} s_{n-j+1}^n}{\prod_{m=1}^{j-1} (1-\alpha_m)} z^n - s_1^1 \cdot z+ \sum_{n=1}^\infty q_n z^n - q_1 z
    \notag \\
     &=& \sum_{j=1}^\infty \frac{ (-1)^{j-1}\parxi^{j-1}}{\prod_{m=1}^{j-1} (1-\alpha_m)} z^{j-1} \cdot \sum_{n=j}^\infty s_{n-j+1}^n z^{n-j+1} 
         \notag \\
     && - \parxi z + \sum_{n=1}^\infty q_n z^n - (1-\parxi) z
    \notag \\
     &=& \sum_{j=1}^\infty \frac{(-1)^{j-1}\parxi^{j-1}}{\prod_{m=1}^{j-1} (1-\alpha_m)} \,z^{j-1} \cdot S^j(z) + Q(z) -  z, \label{eqn:represpnega}
\end{eqnarray}
where
$$
S^j(z):=\sum_{\ell=1}^\infty s_\ell^{\ell+j-1}  z^\ell,\qquad j=1,2,\ldots,\qquad \text{and}\qquad Q(z):=\sum_{n=1}^\infty q_n z^n.
$$
We define in an analogous way $R^{j}(z):=\sum_{\ell=1}^\infty r_\ell^{\ell+j-1} z^\ell$. Set $\gamma:=\frac{1-\parxi}{\parxi}$. Note that, by (\ref{eqn:recursionrvss}), for $j=2,3,\ldots$
$$
S^j(z)=\sum_{\ell=1}^\infty s_\ell^{\ell+j-1}  z^\ell = \sum_{\ell=1}^\infty (-\gamma^{j-2} r_\ell^{\ell+j-1-1})  z^\ell = - \gamma^{j-2}\sum_{\ell=1}^\infty r_\ell^{\ell+(j-1)-1} z^\ell = -\gamma^{j-2}R^{j-1}(z).
$$
Analogously, by (\ref{eqn:recursionrvss}), for $j=2,3,\ldots$,
$$
R^j(z)=\sum_{\ell=1}^\infty r_\ell^{\ell+j-1}  z^\ell = \sum_{\ell=1}^\infty \gamma^{-(j-2)} s_\ell^{\ell+j-1-1}  z^\ell  = \gamma^{-(j-2)} S^{j-1}(z).
$$
Putting these two observations together gives $S^j(z)=-\gamma S^{j-2}(z)$ for $j\geq 3$. Further,
$$
S^1(z)=\sum_{\ell=1}^\infty s_\ell^{\ell}  z^\ell = s_1^1 z = \parxi \, z.
$$
This shows, by induction, that
$$
S^j(z)=\begin{cases}
    (-1)^{j/2}\cdot \gamma^{\frac{j-2}{2}} \cdot R^1(z), &\quad\text{$j\geq 2$ even},\\
    (-1)^{(j-1)/2}\cdot \gamma^{\frac{j-1}{2}}\cdot\parxi z, &\quad \text{$j\geq 1$ odd}.
\end{cases}
$$
We can now use \eqref{eqn:recursionr+-} to express $R^1$ in terms of $Q$ and the functions
\begin{equation*}
    g_1(z):=\sin_{-\theta}(z) = \sum_{k=0}^\infty \frac{(-1)^k z^{2k+1}}{\prod_{m=0}^{2k} (1-\alpha_m)}\quad\text{and}\quad
        g_2(z):=\cos_{-\theta}(z) = \sum_{k=0}^\infty \frac{(-1)^k z^{2k}}{\prod_{m=0}^{2k-1} (1-\alpha_m)},
\end{equation*}
where we used that $\prod_{i=0}^n (1-\alpha_i) = \frac{(-\theta,-\theta)_{n+1}}{(1-(-\theta))^{n+1}}=[n+1]_{-\theta}!$ and \eqref{eqn:qversionstrigexp}.
Indeed, \eqref{eqn:recursionr+-} implies
\begin{eqnarray*}
R^1(z) &=& r_1^1 z + \sum_{\ell=2}^\infty r_\ell^\ell z^\ell = (1-\parxi) z+\sum_{\ell=2}^\infty \sum_{j=1}^{\ell-1}  \frac{(-1)^{j-1}\parxi^{j-1} (1-\parxi)s_{\ell-j}^{\ell-1}}{ \prod_{m=1}^{j-1} (1-\alpha_m)}z^\ell + \sum_{\ell=2}^\infty q_{\ell-1}(1-\parxi) z^\ell
\\
 &=& (1-\parxi) z+\sum_{j=1}^\infty \frac{(-1)^{j-1}\parxi^{j-1}(1-\parxi)}{ \prod_{m=1}^{j-1} (1-\alpha_m)} z^j \sum_{\ell=j+1}^\infty s_{\ell-j}^{\ell-1} z^{\ell-j} + \sum_{\ell=2}^\infty q_{\ell-1}(1-\parxi) z^{\ell-1} \, z
 \\
 &=& (1-\parxi) z+\sum_{j=1}^\infty \frac{(-1)^{j-1}\parxi^{j-1}(1-\parxi)}{\prod_{m=1}^{j-1} (1-\alpha_m)} z^j \sum_{\ell=1}^\infty s_{\ell}^{\ell+j-1} z^{\ell} +  Q(z) (1-\parxi) z
 \\
  &=& (1-\parxi) z+\sum_{j=1}^\infty \frac{(-1)^{j-1}\parxi^{j-1}(1-\parxi)}{ \prod_{m=1}^{j-1} (1-\alpha_m)} z^j S^j(z) +   Q(z) (1-\parxi) z
   \\
  &=& (1-\parxi) z-(1-\parxi)\left(\sum_{k=1}^\infty \frac{\parxi^{2k-1}\gamma^{k-1}}{\prod_{m=1}^{2k-1} (1-\alpha_m)}\, z^{2k} (-1)^k\right) \cdot R^1(z)
  \\
  && +(1-\parxi)\sum_{k=0}^\infty  \frac{\parxi^{2k}\gamma^{k}}{\prod_{m=1}^{2k} (1-\alpha_m)} z^{2k+1} (-1)^k \parxi z +   Q(z) (1-\parxi) z
       \\
  &=& (1-\parxi) z-(1-\parxi)\parxi^{-1}\gamma^{-1} \left(\sum_{k=1}^\infty \frac{(-1)^k(\parxi \sqrt{\gamma} z)^{2k}}{\prod_{m=1}^{2k-1} (1-\alpha_m)} \right) \cdot R^1(z)
  \\
  && +(1-\parxi)z\gamma^{-1/2}\sum_{k=0}^\infty  \frac{(-1)^k(\parxi \sqrt{\gamma} z)^{2k+1}}{\prod_{m=1}^{2k} (1-\alpha_m)} +   Q(z) (1-\parxi) z
         \\
  &=& (1-\parxi) z-(g_2(\parxi \sqrt{\gamma} z)-1) \cdot R^1(z)
   +(1-\parxi)z\gamma^{-1/2} g_1(\parxi \sqrt{\gamma} z) +   Q(z) (1-\parxi) z.
\end{eqnarray*}
This shows that
\begin{equation} \label{eqn:replaced}
R^1(z)
  = \frac{1}{g_2(\parxi \sqrt{\gamma} z)}\left( (1-\parxi) z 
   +(1-\parxi)z\gamma^{-1/2} g_1(\parxi \sqrt{\gamma} z) +   Q(z) (1-\parxi) z\right).
\end{equation}
This is an equation for $R^1(z)$ and $Q(z)$. We now use \eqref{eqn:recursionqn} in order to find a complementary equation:
\begin{eqnarray*}
Q(z)&=& \sum_{n=1}^\infty q_n z^n = \sum_{n=2}^\infty q_n z^n +q_1 z 
\\
&=& \sum_{n=2}^\infty r_n^n z^n -\sum_{n=2}^\infty \sum_{j=1}^{n-1} \frac{(-1)^{j-1} \parxi^{j-1} (1-\parxi) s_{n-j}^{n-1}}{ \prod_{m=1}^{j} (1-\alpha_m)} z^n+(1-\parxi)z
\\
&=& \sum_{n=1}^\infty r_n^n z^n -(1-\parxi) z  -\sum_{j=1}^\infty\frac{(-1)^{j-1} \parxi^{j-1}(1-\parxi)}{\prod_{m=1}^{j} (1-\alpha_m)} z^j \sum_{n=j+1}^\infty s_{n-j}^{n-1} z^{n-j}+(1-\parxi)z
\\
&=& \sum_{n=1}^\infty r_n^n z^n   -\sum_{j=1}^\infty \frac{(-1)^{j-1}\parxi^{j-1} (1-\parxi)}{\prod_{m=1}^{j} (1-\alpha_m)} z^j \sum_{\ell=1}^\infty s_{\ell}^{\ell+j-1} z^{\ell}
\\
&=& R^1(z)   -\sum_{j=1}^\infty \frac{ (-1)^{j-1}\parxi^{j-1}(1-\parxi)}{ \prod_{m=1}^{j} (1-\alpha_m)} z^j S^j(z)
\\
&=& R^1(z)  +(1-\parxi)\left( \sum_{k=1}^\infty \frac{\parxi^{2k-1}\gamma^{k-1}}{\prod_{m=1}^{2k} (1-\alpha_m)} z^{2k} (-1)^k\right) R^1(z)
\\
&&  -(1-\parxi)\sum_{k=0}^\infty  \frac{\parxi^{2k}\gamma^{k}}{ \prod_{m=1}^{2k+1} (1-\alpha_m)} z^{2k+1} (-1)^k  \parxi z
\\
&=& R^1(z)  +(1-\parxi)\parxi^{-2}\gamma^{-3/2}z^{-1} \left( \sum_{k=1}^\infty \frac{(-1)^k (\parxi \sqrt{\gamma} z)^{2k+1}}{\prod_{m=1}^{2k} (1-\alpha_m)} \right) R^1(z)
\\
&&  +(1-\parxi)\sum_{k=1}^\infty  \frac{\parxi^{2k-2}\gamma^{k-1}}{ \prod_{m=1}^{2k-1} (1-\alpha_m)} z^{2k-1} (-1)^k  \parxi z
\\
&=& R^1(z)  +(1-\parxi)\parxi^{-2}\gamma^{-3/2}z^{-1} \left( \sum_{k=1}^\infty \frac{(-1)^k (\parxi \sqrt{\gamma} z)^{2k+1}}{\prod_{m=1}^{2k} (1-\alpha_m)} \right) R^1(z)
\\
&&  +(1-\parxi)\parxi^{-1}\gamma^{-1}\sum_{k=1}^\infty  \frac{(-1)^k (\parxi\sqrt{\gamma}z)^{2k}}{ \prod_{m=1}^{2k-1} (1-\alpha_m)}
\\
&=& R^1(z) +\parxi^{-1}\gamma^{-1/2}z^{-1} (g_1(\parxi \sqrt{\gamma}z) -\parxi\gamma^{1/2}z)R^1(z) +g_2(\parxi\sqrt{\gamma}z)-1
\\
&=&  \parxi^{-1}\gamma^{-1/2}z^{-1} g_1(\parxi \sqrt{\gamma}z) R^1(z) +g_2(\parxi\sqrt{\gamma}z)-1.
\end{eqnarray*}
Combining with \eqref{eqn:replaced}, we obtain
\begin{eqnarray*}
R^1(z)
  &=& \frac{1}{g_2(\parxi \sqrt{\gamma} z)}\bigl( (1-\parxi) z 
   +(1-\parxi)z\gamma^{-1/2} g_1(\parxi \sqrt{\gamma} z)
   \\
   && +  (1-\parxi) z\parxi^{-1}\gamma^{-1/2}z^{-1} g_1(\parxi \sqrt{\gamma}z) R^1(z) +(1-\parxi) z g_2(\parxi\sqrt{\gamma}z)-(1-\parxi) z\bigr)
   \\
     &=& \frac{1}{g_2(\parxi \sqrt{\gamma} z)}\bigl( (1-\parxi)z\gamma^{-1/2} g_1(\parxi \sqrt{\gamma} z)
   \\
   && + \gamma^{1/2} g_1(\parxi \sqrt{\gamma}z) R^1(z) +(1-\parxi) z g_2(\parxi\sqrt{\gamma}z)\bigr),
\end{eqnarray*}
so that
\begin{equation*}
\frac{g_2(\parxi \sqrt{\gamma} z)-\gamma^{1/2} g_1(\parxi \sqrt{\gamma}z)}{g_2(\parxi \sqrt{\gamma} z)}\, R^1(z)
     = \frac{(1-\parxi) z}{g_2(\parxi \sqrt{\gamma} z)}\left( \gamma^{-1/2} g_1(\parxi \sqrt{\gamma} z) +  g_2(\parxi\sqrt{\gamma}z)\right),
\end{equation*}
which implies
\begin{equation*}
R^1(z)
     = \frac{(1-\parxi) z\left(g_2(\parxi\sqrt{\gamma}z)+\gamma^{-1/2} g_1(\parxi \sqrt{\gamma} z)\right)}{g_2(\parxi \sqrt{\gamma} z)-\gamma^{1/2} g_1(\parxi \sqrt{\gamma}z)}.
\end{equation*}
This gives
\begin{eqnarray*}
Q(z)
&=& \parxi^{-1}\gamma^{-1/2}z^{-1} g_1(\parxi \sqrt{\gamma}z) R^1(z) +g_2(\parxi\sqrt{\gamma}z)-1
\\
&=& \parxi^{-1}\gamma^{-1/2}z^{-1} g_1(\parxi \sqrt{\gamma}z) \frac{(1-\parxi) z\left(g_2(\parxi\sqrt{\gamma}z)+\gamma^{-1/2} g_1(\parxi \sqrt{\gamma} z)\right)}{g_2(\parxi \sqrt{\gamma} z)-\gamma^{1/2} g_1(\parxi \sqrt{\gamma}z)} +g_2(\parxi\sqrt{\gamma}z)-1.
\end{eqnarray*}

This allows us to continue in \eqref{eqn:represpnega}. For better readability, we are going to use the abbreviations $\gge:=g_1(\parxi \sqrt{\gamma}z)$ and $\ggz:=g_2(\parxi \sqrt{\gamma}z)$. We obtain:
\begin{eqnarray*}
    P(z)
     &=&\frac{1}{z}\left(  \sum_{j=1}^\infty \frac{(-1)^{j-1}\parxi^{j-1}}{\prod_{m=1}^{j-1} (1-\alpha_m)} z^{j-1} \cdot S^j(z) + Q(z) -  z\right)
     \\
     &=&\frac{1}{z}\left(  - \sum_{k=1}^\infty \frac{\parxi^{2k-1} \gamma^{k-1}}{\prod_{m=1}^{2k-1} (1-\alpha_m)} z^{2k-1} (-1)^k \cdot R^1(z)\right.
     \\
     && \left. + \sum_{k=0}^\infty \frac{\parxi^{2k}\gamma^{k}}{\prod_{m=1}^{2k} (1-\alpha_m)} z^{2k}(-1)^{k} \cdot \parxi z + Q(z) -  z\right)
               \\
     &=&\frac{1}{z}\left(  - \parxi^{-1}\gamma^{-1} z^{-1}\sum_{k=1}^\infty \frac{(-1)^k(\parxi\sqrt{\gamma}z)^{2k}}{\prod_{m=1}^{2k-1} (1-\alpha_m)}  \cdot R^1(z)\right.
     \\
     && \left. + \gamma^{-1/2}\sum_{k=0}^\infty \frac{(-1)^{k}(\parxi\sqrt{\gamma}z)^{2k+1}}{\prod_{m=1}^{2k} (1-\alpha_m)} + Q(z) -  z\right)
               \\
     &=&
     \frac{1}{z}\left(-\parxi^{-1}\gamma^{-1} z^{-1} (g_2(\parxi\sqrt{\gamma}z)-1)R^1(z) +\gamma^{-1/2}g_1(\parxi\sqrt{\gamma}z)+Q(z) -z \right)
     \\
          &=&
     \frac{1}{z}\left[-\parxi^{-1}\gamma^{-1} z^{-1} (\ggz-1)\left( \frac{(1-\parxi) z(\ggz+\gamma^{-1/2} \gge)}{\ggz-\gamma^{1/2} \gge}\right)\right.
     \\
     && \left.+\gamma^{-1/2}\gge+ \parxi^{-1}\gamma^{-1/2}z^{-1} \gge \frac{(1-\parxi) z(\ggz+\gamma^{-1/2}\gge )}{\ggz-\gamma^{1/2} \gge} +\ggz-1\right]-1
          \\
          &=&
     \frac{1}{z(\ggz-\gamma^{1/2} \gge)}\left[-  \ggz^2 +\ggz -\ggz\gamma^{-1/2}\gge+\gamma^{-1/2}\gge+\gamma^{-1/2}\gge\ggz-\gamma^{-1/2}\gge\gamma^{1/2} \gge\right.
     \\
     &&+ \left.\gamma^{1/2} \gge \ggz+ \gge^2+\ggz^2 - \ggz - \gamma^{1/2} \gge \ggz +\gamma^{1/2} \gge\right]-1
          \\
          &=&
     \frac{1}{z(\ggz-\gamma^{1/2} \gge)}\left[\gamma^{-1/2} \gge +\gamma^{1/2} \gge\right]-1
               \\
          &=&
     \frac{\frac{1}{\sqrt{\parxi(1-\parxi)}}\,\gge}{z(\ggz-\gamma^{1/2} \gge)}-1,
\end{eqnarray*}
where we used in the last step that $\gamma^{-1/2} +\gamma^{1/2}=\gamma^{1/2}(\frac{\parxi}{1-\parxi}+1)=\frac{1}{\sqrt{\parxi(1-\parxi)}}$. We further note that $\parxi \sqrt{\gamma}=\sqrt{\parxi (1-\parxi)}$. Replacing these values, we can conclude. 
\end{proof}


The purpose of next lemma is to represent the generating function in \eqref{eq:main_formula_theta<0} as a series with coefficients defined in terms of Stanley's $q$-analogues of the zigzag numbers.
Let us remind the reader of the definitions of the $q$-versions of sine and cosine from \eqref{eqn:qversionstrigexp} and the definitions of Stanley's $q$-analogues of the Euler zigzag numbers, $E_n(q)$, from \eqref{eq:Stanley_q_analogue}. 

\begin{lem} \label{lem:chatgptlemma} Let $q>0$ and $\delta\in[0,1]$. Let $|z|<z_0$, where $z_0$ is the radius of convergence of $\tan_q:=\frac{\sin_q}{\cos_q}$. Then 
$$
\frac{\sin_q(z)}{\cos_q(z)-\delta \sin_q(z)} = \sum_{n=1}^\infty b_n(q) z^n,
$$
where
\begin{equation} \label{eqn:bnlemm10}
b_n(q) := \sum_{r=1}^{n} \left[\delta^{r-1}
  \ \sum_{\substack{j_1+\dots+j_r=n\\ j_k\ \text{odd}}}
  \prod_{k=1}^r \frac{E_{j_k}(q)}{[j_k]_q!}\right],
\qquad n\geq 1.
\end{equation}
\end{lem}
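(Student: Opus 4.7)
The plan is to reduce the identity to a geometric series expansion of $\tan_q$ and then use Stanley's identity \eqref{eq:Stanley_q_analogue} to identify its coefficients.

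First, I would divide numerator and denominator of the left-hand side by $\cos_q(z)$, which is nonzero in a neighborhood of the origin, to obtain
\begin{equation*}
\frac{\sin_q(z)}{\cos_q(z)-\delta\sin_q(z)} \;=\; \frac{\tan_q(z)}{1-\delta\,\tan_q(z)}.
\end{equation*}
Since $\tan_q$ is analytic at $0$ with $\tan_q(0)=0$, for $|z|$ sufficiently small we have $|\delta\tan_q(z)|<1$, so we may expand as a geometric series:
\begin{equation*}
\frac{\tan_q(z)}{1-\delta\tan_q(z)} \;=\; \sum_{r=1}^\infty \delta^{r-1}\,\tan_q(z)^r.
\end{equation*}
(Alternatively, the identity can be read at the level of formal power series, since $\tan_q$ has zero constant term.)

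Next, I would extract the series expansion of $\tan_q$ from Stanley's identity \eqref{eq:Stanley_q_analogue}. The function $\cos_q$ is even and $\sin_q$ is odd, so $\sec_q(z)-1$ is even and $\tan_q(z)$ is odd. Splitting the right-hand side of \eqref{eq:Stanley_q_analogue} by parity therefore yields
\begin{equation*}
\tan_q(z) \;=\; \sum_{\substack{j\geq 1\\ j\ \text{odd}}} \frac{E_j(q)}{[j]_q!}\,z^j.
\end{equation*}

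Finally, I would raise this series to the $r$-th power by a Cauchy product, giving
\begin{equation*}
\tan_q(z)^r \;=\; \sum_{n\geq r}\, z^n \sum_{\substack{j_1+\dots+j_r=n\\ j_k\ \text{odd}}} \prod_{k=1}^r \frac{E_{j_k}(q)}{[j_k]_q!},
\end{equation*}
and then substitute into the geometric series expansion and interchange the order of summation (justified by absolute convergence for $|z|<z_0$ small enough, or by formal power series manipulations since for fixed $n$ only finitely many $r\leq n$ contribute). This yields
\begin{equation*}
\sum_{r=1}^\infty \delta^{r-1}\tan_q(z)^r \;=\; \sum_{n=1}^\infty z^n \sum_{r=1}^n \delta^{r-1} \sum_{\substack{j_1+\dots+j_r=n\\ j_k\ \text{odd}}} \prod_{k=1}^r \frac{E_{j_k}(q)}{[j_k]_q!},
\end{equation*}
which matches \eqref{eqn:bnlemm10}. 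No real obstacle arises; the only subtle point is the parity extraction of $\tan_q$ from \eqref{eq:Stanley_q_analogue}, and the verification that the double series can be reorganised — both are straightforward.
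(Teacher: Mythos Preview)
Your proposal is correct and follows essentially the same route as the paper: divide by $\cos_q$, expand $\tan_q/(1-\delta\tan_q)$ as a geometric series, identify the odd-indexed coefficients of $\tan_q$ via the parity splitting of Stanley's identity \eqref{eq:Stanley_q_analogue}, and then take the Cauchy product. The only difference is the order in which the two ingredients are presented; the content is the same.
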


\begin{proof}
Recall the equation \eqref{eq:Stanley_q_analogue}.
Since $\cos_q(z)$ is even and $\sin_q(z)$ is odd, we obtain the even/odd
splitting
\[
\frac{1}{\cos_q(z)}
= \sum_{m=0}^\infty E_{2m}(q)\,\frac{z^{2m}}{[2m]_q!},
\qquad
\tan_q(z):=\frac{\sin_q(z)}{\cos_q(z)}
= \sum_{m=0}^\infty E_{2m+1}(q)\,\frac{z^{2m+1}}{[2m+1]_q!}.
\]

Since $\cos_q(z)\neq 0$, we have
\[
\frac{\sin_q(z)}{\cos_q(z)-\sin_q(z)}
= \frac{\tan_q(z)\cos_q(z)}{\cos_q(z)(1-\delta \tan_q(z))}
= \frac{\tan_q(z)}{1- \delta \tan_q(z)}.
\]
Let us rewrite the series for $\tan_q(z)$ using the odd $q$-Euler numbers:
\[
\tan_q(z) = \sum_{j=1}^\infty
  u_j(q)\,z^j,
\qquad 
\text{where}
\qquad
u_j(q) :=
\begin{cases}
\dfrac{E_j(q)}{[j]_q!}, & j\ \text{odd},\\[4pt]
0, & j\ \text{even}.
\end{cases}
\]

Now we expand using the geometric series:
\begin{eqnarray*}
    \frac{\sin_q(z)}{\cos_q(z)-\delta \sin_q(z)}
    &=& \frac{1}{\delta}\, \frac{\delta \tan_q(z)}{1-\delta \tan_q(z)}
    \\
    &=&
\frac{1}{\delta}\,\sum_{r=1}^\infty \delta^r \tan_q(z)^r
\\
&=&
\frac{1}{\delta}\,\sum_{r=1}^\infty \delta^r \sum_{j_1,\ldots,j_r=1}^\infty u_{j_1}(q)\cdot\ldots\cdot u_{j_r}(q) z^{j_1+\ldots+j_r}
\\
&=& \frac{1}{\delta}\, \sum_{r=1}^\infty \delta^r
    \sum_{n=r}^\infty
    \sum_{j_1+\cdots+j_r=n}
      u_{j_1}(q)\cdot\ldots\cdot u_{j_r}(q)\,z^n.
\end{eqnarray*}
Then the coefficient of $z^n$ is
\[
b_n(q)
= \frac{1}{\delta}\, \sum_{r=1}^{n} \left[ \delta^r
  \sum_{j_1+\cdots+j_r=n}
  u_{j_1}(q)\cdots u_{j_r}(q)\right].
\]
Since $u_j(q)=0$ for even $j$, we may restrict to the odd parts and obtain (\ref{eqn:bnlemm10}).
\end{proof}


\begin{proof}[Proof of part \ref{thm2-iti} of Theorem~\ref{thm:gfthetaneg}]
    We put together formula \eqref{eq:main_formula_theta<0} with Lemma~\ref{lem:chatgptlemma}. Here, $\delta:=\sqrt{(1-\parxi)/\parxi}$ and $q:=-\theta$ and we use that $b_1(q)=1$.
\end{proof}

\begin{lem}
\label{lem:polynomial_in_xi}    
The quantity $p_n^{-1,\parxi}$ in \eqref{eqn:majumdardharextension} is a polynomial in $\parxi$.
\end{lem}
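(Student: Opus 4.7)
The plan is to recognize the infinite sum in \eqref{eqn:majumdardharextension} as, up to a sign and a factorial, the $(n+1)$-st derivative of the cotangent function at $a:=\arctan(\sqrt{\parxi/(1-\parxi)})$, and then exploit the polynomial structure of these derivatives. Concretely, differentiating the Mittag--Leffler expansion $\cot(u)=\sum_{k\in\mathbb{Z}}1/(u-k\pi)$ term-by-term (which is legitimate for any order $\geq 1$ by absolute convergence of the differentiated series) gives
\begin{equation*}
\sum_{k\in\mathbb{Z}} \frac{1}{(a+k\pi)^{n+2}} = \frac{(-1)^{n+1}}{(n+1)!}\,\cot^{(n+1)}(a),
\end{equation*}
so \eqref{eqn:majumdardharextension} can be rewritten entirely in terms of $\cot^{(n+1)}(a)$.

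Next, using the elementary identity $\cot'(a)=-(1+\cot^2 a)$, I would prove by induction that $\cot^{(m)}(a)=P_m(\cot a)$ for a polynomial $P_m$ of degree $m+1$ determined by $P_0(x)=x$ and the recursion $P_{m+1}(x)=-(1+x^2)P_m'(x)$. A parity argument on this recursion -- observing that $x\mapsto-(1+x^2)$ is even while differentiation swaps parity -- shows that $P_m$ is an even polynomial when $m$ is odd, and an odd polynomial when $m$ is even. With $c:=\cot a=\sqrt{(1-\parxi)/\parxi}$, this parity yields the representation
\begin{equation*}
P_{n+1}(c)=\begin{cases}\tilde P(c^2), & n \text{ even},\\ c\,\tilde P(c^2), & n \text{ odd},\end{cases}
\end{equation*}
for some polynomial $\tilde P$ of degree at most $\lfloor (n+2)/2\rfloor$. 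Since $c^2=(1-\parxi)/\parxi$, this expresses $P_{n+1}(c)$ as a finite sum of (possibly negative) powers of $\parxi$ and $1-\parxi$, together with a possible extra factor $c=\sqrt{(1-\parxi)/\parxi}$ when $n$ is odd.

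Finally, combining the above with the prefactor $\parxi(\parxi(1-\parxi))^{n/2}$ in \eqref{eqn:majumdardharextension}, a direct exponent count shows that each monomial $((1-\parxi)/\parxi)^j$ of $\tilde P$ ultimately contributes a term $\parxi^{A}(1-\parxi)^{B}$ with $A,B$ non-negative integers: when $n$ is even, the prefactor $\parxi^{n/2+1}(1-\parxi)^{n/2}$ already has integer exponents and the bound $j\leq n/2+1$ keeps $A=n/2+1-j\geq 0$; when $n$ is odd, the leftover factor $c$ pairs with the half-integer factor $\sqrt{\parxi(1-\parxi)}$ inside the prefactor to yield the integer combination $\parxi^{(n+1)/2}(1-\parxi)^{(n+1)/2}$, and the bound $j\leq (n+1)/2$ again gives $A\geq 0$. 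The main (and only truly delicate) step is precisely this bookkeeping of fractional exponents: the parity of $P_{n+1}$ is dictated by the parity of $n$ in exactly the way required for the square roots to cancel, and once this is checked the polynomiality of $p_n^{-1,\parxi}$ in $\parxi$ follows at once.
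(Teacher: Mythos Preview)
Your argument is correct and aligns with the paper's own proof. The paper writes the sum in \eqref{eqn:majumdardharextension} as $\frac{-1}{(n+1)!}\,Q_{n+1}(\cot a)$, where $Q_m$ is defined by $\cot^{(m)}=Q_m(\cot)$ --- these are precisely your polynomials $P_m$ with the same recursion $Q_{m+1}(x)=-(1+x^2)Q_m'(x)$. You spell out the parity-and-exponent bookkeeping that the paper leaves implicit; one small cosmetic point is that the Mittag--Leffler series for $\cot$ as you wrote it only converges as symmetric partial sums, so it is cleaner to start directly from the absolutely convergent identity $\cot'(u)=-\sum_{k\in\mathbb{Z}}(u-k\pi)^{-2}$ and differentiate $n$ further times. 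The paper also records a complementary route you do not use: a first-order recursion
\[
(n+2)\,p_{n+1}^{-1,\xi}=\bigl(n(1-2\xi)+2(1-\xi)\bigr)p_n^{-1,\xi}-2\xi(1-\xi)\,\frac{d}{d\xi}p_n^{-1,\xi},
\]
which gives polynomiality in $\xi$ immediately by induction. Your direct approach has the advantage of making the degree in $\xi$ explicit, while the recursion is the quicker way to conclude.
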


\begin{proof}
It follows from \eqref{eqn:majumdardharextension} and some computation that the sequence $(p_n^{-1,\parxi})$ satisfies the recursion
\begin{equation*}
    (n + 2)p_{n+1}^{-1,\parxi} = \bigl(n(1 - 2\parxi) + 2(1 - \parxi)\bigr)p_{n}^{-1,\parxi}- 2\parxi(1 - \parxi)\frac{d}{d\parxi}p_{n}^{-1,\parxi}.
\end{equation*}
Alternatively, it admits the following formula: define the polynomial $Q_n$ by $\bigl(\frac{1}{\tan}\bigr)^{(n)} = Q_n\bigl(\frac{1}{\tan}\bigr)$. We have $Q_0(z)=z$, $Q_1(z)=-1-z^2$, $Q_2(z)=2z+2z^3$, etc. The triangular array of coefficients appears here: \href{https://oeis.org/A008293}{A008293}. Then 
\begin{equation*}
    p_{n}^{-1,\parxi} =\frac{-1}{(n+1)!} \parxi \bigl(\parxi(1-\parxi)\bigr)^{n/2} Q_{n+1}\left(\sqrt{\frac{1-\parxi}{\parxi}}\right). \qedhere
\end{equation*}
\end{proof}

To conclude, we present the proof of Corollary~\ref{cor:asymptotics_theta<0}. As a first step, we derive an explicit expression for the function
    $\tan_q(z) := \frac{\sin_q(z)}{\cos_q(z)}$,
where the $q$-sine and $q$-cosine functions are introduced in \eqref{eqn:qversionstrigexp}.
\begin{lem}
\label{lem:tan_q}
Let $0<q<1$. Let $z_0=z_0(q)$ be the smallest positive real zero of $\cos_q$. Then, for real $z\in(-z_0,z_0)$, we have 
    \begin{equation*}
    \tan_q(z)
=  \tan\left( \sum_{j=0}^\infty \arctan \bigl( (1-q)zq^{j})\bigr) \right).
\end{equation*}
\end{lem}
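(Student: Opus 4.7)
The plan is to derive the identity by complexifying and factoring the $q$-exponential, in complete analogy with the classical proof that $\tan z = \tan(\arctan z)$ via Euler's formula.

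\textbf{Step 1 (complexification).} The series in \eqref{eqn:qversionstrigexp} immediately yield the $q$-analogue of Euler's formula,
\[
e_q(iz) = \cos_q(z) + i\sin_q(z),\qquad e_q(-iz)=\cos_q(z)-i\sin_q(z).
\]
Since $\cos_q(z)\neq 0$ for $z\in(-z_0,z_0)$ by the definition of $z_0$, this rearranges into
\[
\tan_q(z) \;=\; \frac{1}{i}\cdot \frac{R(z)-1}{R(z)+1},\qquad \text{where } R(z):=\frac{e_q(iz)}{e_q(-iz)}.
\]

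\textbf{Step 2 (infinite product and phase representation).} For $0<q<1$, the $q$-binomial theorem (already used in Corollary~\ref{cor:nonameimp}) gives $e_q(w) = 1/((1-q)w;q)_\infty = \prod_{k=0}^\infty (1-(1-q)w q^k)^{-1}$. Substituting $w=\pm iz$ and dividing,
\[
R(z) = \prod_{k=0}^\infty \frac{1+i(1-q)zq^k}{1-i(1-q)zq^k}.
\]
For real $u$, setting $u=\tan\phi$ yields $\frac{1+iu}{1-iu}=e^{i\phi}/e^{-i\phi}=e^{2i\arctan u}$; applying this with $u=(1-q)zq^k$ (real, since $z$ is real), each factor in the product becomes a pure phase. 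The resulting series $\phi(z):=\sum_{k=0}^\infty \arctan((1-q)zq^k)$ converges absolutely because $|\arctan((1-q)zq^k)|\leq (1-q)|z|q^k$, and therefore
\[
R(z) = \exp\bigl( 2i\,\phi(z) \bigr).
\]

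\textbf{Step 3 (conclusion).} Plugging $R(z)=e^{2i\phi(z)}$ into Step~1 and factoring $e^{i\phi}$ from numerator and denominator,
\[
\tan_q(z) = \frac{1}{i}\cdot \frac{e^{2i\phi(z)}-1}{e^{2i\phi(z)}+1} = \frac{1}{i}\cdot \frac{e^{i\phi(z)}-e^{-i\phi(z)}}{e^{i\phi(z)}+e^{-i\phi(z)}} = \tan \phi(z),
\]
which is exactly the claimed identity. The only subtleties are that the infinite product converges (trivial, since the factors $1\mp i(1-q)zq^k$ tend to $1$ geometrically and are bounded away from $0$ for $|z|<z_0$), that $\cos_q$ is nonzero on $(-z_0,z_0)$ (built into the hypothesis), and that rearranging the product into an exponential of a sum is legal (absolute convergence of the logarithms). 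None of these presents a real obstacle; the key algebraic content is already contained in the $q$-binomial factorization of $e_q$, which is the natural $q$-analogue of the identity $e^{iz}=\prod_k(\cdots)$ used in the classical derivation of $\tan z$.
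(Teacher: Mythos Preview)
Your proof is correct and follows essentially the same route as the paper: both arguments complexify via $e_q(\pm iz)$, factor $e_q$ using the $q$-binomial theorem, and read off the total phase as $\sum_j \arctan((1-q)zq^j)$. The only cosmetic difference is that you work directly with the ratio $R(z)=e_q(iz)/e_q(-iz)$ and the identity $(1+iu)/(1-iu)=e^{2i\arctan u}$, whereas the paper writes each factor of $e_q(iz)$ in polar form $r_j e^{ia_j}$ and then cancels the moduli $r_j$ in the quotient.
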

Clearly, $\tan_q$ has well-defined limits as $q\to0$ and $q\to1$, namely the identity function in the former case and the classical tangent in the latter.
\begin{proof}
By the $q$-binomial theorem,
$$
e_q(z) = \sum_{n=0}^\infty \frac{z^{n}}{[n]_q!}= \sum_{n=0}^\infty \frac{(1-q)^n z^{n}}{(q;q)_n} = \frac{1}{((1-q)z;q)_\infty} = \prod_{j=0}^\infty \frac{1}{1-(1-q)zq^j}.
$$

Recall that
$$
\sin_q(z)=\frac{1}{2i}( e_q(iz)-e_q(-iz))\qquad\text{and}\qquad\cos_q(z)=\frac{1}{2}( e_q(iz)+e_q(-iz)),
$$
which  can be seen as in the classical case.
Now note that
$$
e_q(iz) =  \prod_{j=0}^\infty \frac{1}{1-i(1-q)zq^j} = \prod_{j=0}^\infty \frac{1+i(1-q)zq^j}{1+(1-q)^2z^2q^{2j}}.
$$
Each factor can be written in polar form
$$
\frac{1+i(1-q)zq^j}{1+(1-q)^2z^2q^{2j}} = r_j e^{a_j i} =  r_j(\cos(a_j) + i \sin(a_j));
$$
and $(1-q)zq^j=\frac{\sin(a_j)}{\cos(a_j)}$ yields
$
a_j=\arctan( (1-q)zq^j )
$.
From here we have
$$
e_q(iz) =\prod_{j=0}^\infty r_j \cdot e^{ i \sum_{j=0}^\infty a_j}.
$$
Since $z$ is assumed to be real and $e_q(\bar \alpha) = \overline{e_q(\alpha)}$, we have
$$
 e_q(-iz) 
 =\prod_{j=0}^\infty r_j \cdot e^{- i  \sum_{j=0}^\infty a_j}.
$$
Note that $e_q(iz)$ and $e_q(-iz)$ are non-zero, by the assumption on $z$. Inserting the last two displays into the formulas for $\sin_q$ and $\cos_q$, we obtain
\begin{align*}
    \tan_q(z) &= \frac{\sin_q(z)}{\cos_q(z)}
 = \frac{ e_q(iz)-e_q(-iz)}{i(e_q(iz)+e_q(-iz))}
 =  \frac{ \exp( i \sum_{j=0}^\infty a_j)-\exp(- i \sum_{j=0}^\infty a_j)}{i(\exp(i\sum_{j=0}^\infty a_j)+\exp(-i\sum_{j=0}^\infty a_j)}
    \\
    & =  \frac{ \sin( \sum_{j=0}^\infty a_j)}{\cos(\sum_{j=0}^\infty a_j)}
 = \tan\left( \sum_{j=0}^\infty a_j\right)
=  \tan\left( \sum_{j=0}^\infty \arctan \bigl( (1-q)zq^{j})\bigr) \right). \qedhere
\end{align*}
\end{proof}
As a consequence of Lemma~\ref{lem:tan_q}, the function $\tan_q$ is one-to-one (and strictly increasing) from $[0,z_0(q))$ onto $[0,\infty)$. Hence one may define its inverse function
\begin{equation}
    \label{eq:def_arctan_q}
    \arctan_q:[0,\infty)\to [0,z_0(q)).
\end{equation}

The proof of Corollary~\ref{cor:asymptotics_theta<0} then follows directly, noting that the generating function of the persistence probabilities has its first positive singularity at a point where the denominator of \eqref{eq:main_formula_theta<0} vanishes. Since the function $\tan_q$ can take any value in $[0,\infty)$, the claim follows.

\subsection*{Acknowledgments}
KR is supported by the project RAWABRANCH (ANR-23-CE40-0008), funded by the French National Research Agency. Both authors would like to thank Thomas Simon for interesting discussions. The authors used ChatGPT (OpenAI) for brainstorming.

\bibliographystyle{plain} 

\end{document}